\DeclareMathOperator{\dom}{dom}
\DeclareMathOperator{\At}{At}
\newcommand \pref {\mathbin{\sqcup}}
\newcommand \bmeet {\cdot}
\newcommand \limplies {\mathbin{\rightarrow}}
\newcommand \rest {\mathbin{\vartriangleright}}
\newcommand \aand \wedge
\newcommand{\ppreceq}{\mathbin{\preceq\hspace{-1mm}\preceq}}
\newcommand{\lleq}{\mathbin{\leq\hspace{-1mm}\leq}}
\newcommand{\meet}{\prod}
\newcommand{\join}{\sum}
\newcommand{\compl}[1]{\overline{#1}}
\newcommand{\from}{\colon}
\newcommand{\algebra}[1]{\mathfrak{#1}}
\newcommand{\defn}[1]{\textbf{#1}}
\newcommand\mynobreakpar{\par\nobreak\@afterheading}
\theoremstyle{cupthm}
\newtheorem{theorem}{Theorem}[section]
\newtheorem{proposition}[theorem]{Proposition}
\newtheorem{corollary}[theorem]{Corollary}
\newtheorem{lemma}[theorem]{Lemma}
\theoremstyle{cupdefn}
\newtheorem{definition}[theorem]{Definition}
\theoremstyle{cuprem}
\newtheorem{example}[theorem]{Example}
\numberwithin{equation}{section}
\newcommand{\cA}{\algebra A} \newcommand{\cB}{\algebra B}
\newcommand{\cP}{\algebra P} \newcommand{\cQ}{\algebra Q}
\newcommand{\cS}{\algebra S} \newcommand{\cT}{\algebra T}
\newcommand{\mf}[1]{{\sf Filt}_{\max}(#1)}
\newcommand{\just}[2]{\stackrel{#1}{#2}}
\newcommand{\filt}[1]{{\sf Filt}(#1)}
\newcommand{\dbracket}[1]{\llbracket #1 \rrbracket}
\begin{document}
\runningtitle{Difference--restriction algebras of partial functions: axiomatisations and representations}
\title{Difference--restriction algebras of partial functions: axiomatisations and representations}
\author[1]{C\'elia Borlido$^{\,1}$}
\author[2]{Brett McLean$^{*, \,2}$}

\authorheadline{C\'elia Borlido and Brett McLean}

\support{ * Corresponding author.

  \noindent
  1.  CMUC, Departamento de Matem\'atica, Universidade de Coimbra,
  \mbox{3001-501}~Coimbra, Portugal
  \email{cborlido@mat.uc.pt},  \qquad
  ORCID: 0000-0002-0114-1572
  
  \noindent
  2. Laboratoire J. A. Dieudonn\'e UMR CNRS 7351, Universit\'e Nice
  Sophia Antipolis, 06108~Nice~Cedex~02,
  France\email{brett.mclean@unice.fr},  \qquad ORCID: 0000-0003-2368-8357}

\keywords{Partial function, representation, equational axiomatisation, complete representation, atomic}

\begin{abstract}
We investigate the representation and complete representation classes for algebras of partial functions with the signature of relative complement and domain restriction. We provide and prove the correctness of a finite equational axiomatisation for the class of algebras representable by partial functions. As a corollary, the same equations axiomatise the algebras representable by injective partial functions. For complete representations, we show that a representation is meet complete if and only if it is join complete. Then we show that the class of completely representable algebras is precisely the class of atomic and representable algebras. As a corollary, the same properties axiomatise the class of algebras completely representable by injective partial functions. The universal-existential-universal axiomatisation this yields for these complete representation classes is the simplest possible, in the sense that no existential-universal-existential axiomatisation exists.
\end{abstract}

\maketitle

\section{Introduction}
In J\'onsson and Tarski's seminal \cite{1951}, the authors produced
the very general definition of a \emph{Boolean algebra with operators}
by building upon the foundation provided by the class of Boolean
algebras. This factorisation of concerns into, firstly, the Boolean
order structure, and later, any additional operations is still
conspicuous when one examines the subsequently obtained duality
between Boolean algebras with operators and descriptive general frames
that has proved to be so important in modal logic \cite[Chapter 5:
Algebras and General Frames]{blackburn_rijke_venema_2001}. And a
similar remark can be made for the discrete duality that exists
between, on the one hand, complete and atomic Boolean algebras with
completely additive operators, and on the other hand, Kripke frames.

Whilst the Boolean framework is applicable to many famous classes of structures modelling relations---relation algebras and cylindric algebras are the foremost examples---it is not applicable to algebras modelling partial functions. This is for the simple reason that collections of partial functions cannot be relied upon to be closed under unions, only under unions of `compatible' functions. Though the theory of Boolean algebras with operators has been greatly generalised, weakening the ordered component all the way down to posets \cite{gehrke1994bounded, GEHRKE2001345, 10.2307/27588391}, it is not, in fact, a surfeit of order structure that is the culprit here. Indeed a moment's reflection reveals that the inclusion order is not in general enough to reveal whether two functions agree on any shared domain they may have.

There is growing interest in dualities for algebras modelling partial functions. More specifically, there are by now a number of duality theorems, proven by researchers working on inverse semigroups, having on one side: algebras modelling partial \emph{injective} functions and on the other: certain categories/groupoids or generalisations thereof \cite{lawson_2010, lawson2012non, kudryavtseva2017perspective, LAWSON201677, LAWSON2013117}.\begin{NoHyper}\footnote{There are also categorical equivalences, for example \cite{doi:10.1080/00927870902887096}, but they are not our target in this work.}\end{NoHyper} Recently, a similar result has been obtained for a specific signature of not-necessarily-injective partial functions \cite{2009.07895}, and there are also the dualities of \cite{Bauer_2013} and \cite{kudryavtseva2016boolean} relating to signatures not containing composition. Such classes of algebras arise naturally as inverse semigroups~\cite{wagnergeneralised}, pseudogroups~\cite{LAWSON2013117}, and skew lattices~\cite{Leech19967}, and within computer science appear in the theory of finite state transducers \cite{10.1145/2984450.2984453}, computable functions \cite{JACKSON2015259}, deterministic propositional dynamic logics \cite{DBLP:journals/ijac/JacksonS11}, and separation logic \cite{disjoint}. The dualities have been applied to classical areas of algebra including group theory~\cite{lawson_2010, lawson2012non,  LAWSON201677}, (linear) representation theory~\cite{lawson_margolis_steinberg_2013}, and the theory of $\mathrm C^*$-algebras~\cite{lawson_2010, lawson2012non, LAWSON201677}.

A natural question is: \emph{Can such duality results be organised into a general frame\-work in the spirit of Boolean algebras with operators?} In this paper we investigate algebras of partial functions in a signature that could be a candidate to be the equivalent of the Boolean signature in this framework. The signature is: \emph{relative complement} and \emph{domain restriction}. We have chosen the signature so as to provide us with two things:
\begin{itemize}
\item
a well-behaved order structure,
\item
compatibility information.
\end{itemize}
To be more specific, relative complement provides relativised Boolean
structure (see \Cref{c:3}), and domain restriction can characterise
compatibility via the equation $a \rest b = b \rest
a$. 

We show that the abstract class of isomorphs of such algebras of
partial functions is axiomatised by a finite number of equations
(\Cref{axiomatisation}). One might term this abstract class the class
of `compatibility algebras'. Our signature is relatively unusual in
that it does not include the composition operation on partial
functions.  But we envisage a future theory of `compatibility algebras
with operators', in which familiar operators on partial functions,
such as \emph{composition}, \emph{range}, and \emph{converse} may be
treated.

For the case of discrete dualities, one can make the following observation. First note that the duality between complete and atomic Boolean algebras and sets is a restricted case of a more general \emph{adjunction} between atomic Boolean algebras and sets. Then recall that  the atomic Boolean algebras are precisely the completely representable Boolean algebras. (\emph{Any} meets that exist become intersections, \emph{any} joins become unions.) Hence the adjunction can be viewed in more semantic terms as linking those algebras that are completely representable as fields of sets with the category of sets. 

With this observation in mind, we prepare the ground for a discrete duality for `compatibility algebras with operators' by identifying the completely representable algebras of our signature (\Cref{complete_axiomatisation}). It turns out that in this case, again all that is needed is to add the condition `atomic' to the conditions for representability. Such an outcome is not as automatic as it may seem, for there exist situations, for unary relations \cite{egrot}, for higher-order relations \cite{journals/jsyml/HirschH97a},  and for functions \cite{complete}, where complete representability is characterised by more complex properties.

In the sequel to this paper, \emph{Difference--restriction algebras of partial functions with operators: discrete duality and completion} \cite{diff-rest2}, we carry out one of these planned continuations of the project. There, we present an adjunction (restricting to a duality) for the category of completely representable algebras and complete homomorphisms, then extend to an adjunction/duality for completely representable algebras equipped with compatibility preserving completely additive operators.

\subsubsection*{Structure of paper} In \Cref{preliminaries}, we define formally the class of representable algebras that we wish to axiomatise, list a finite number of valid equations for these algebras, and begin to deduce some consequences of these equations. In \Cref{sec:domains}, we deduce further consequences, relating specifically to the semantic notion of domain inclusion. In \Cref{sec:filters}, we deduce some properties of filters.

 In \Cref{sec:rep}, we use a representation based on maximal filters to prove that our equations axiomatise both the algebras representable as partial functions (\Cref{axiomatisation}) and also the algebras representable as \emph{injective} partial functions (\Cref{cor:rep}).
 
  In \Cref{sec:complete}, we define formally the completely representable algebras and show that they are precisely the atomic representable algebras in both the partial function (\Cref{complete_axiomatisation}) and injective partial function (\Cref{complete_corollary}) cases.

\section{Basic definitions and properties}\label{preliminaries}

In this section, we start with the necessary definitions relating to
algebras of partial functions, present the set of equations that will
eventually become our first axiomatisation, and derive various
consequences of these equations.

Given an algebra $\algebra{A}$, when we write $a \in \algebra{A}$ or
say that $a$ is an element of $\algebra{A}$, we mean that $a$ is an
element of the domain of $\algebra{A}$. Similarly for the notation $S
\subseteq \algebra{A}$ or saying that $S$ is a subset of
$\algebra{A}$. We follow the convention that algebras are always
nonempty. If $S$ is a subset of the domain of a map $\theta$ then
$\theta[S]$ denotes the set $\{\theta(s) \mid s \in S\}$. Given an
binary operation $\bullet$ on $\cA$ and subsets $S_1, S_2 \subseteq
\cA$, we shall use $S_1 \bullet S_2$ to denote the set $\{s_1 \bullet
s_2 \mid s_1 \in S_1,\ s_2 \in S_2\}$.

We begin by making precise what is meant by partial functions and
algebras of partial functions. 
\begin{definition}
  Let $X$ and $Y$ be sets. A \defn{partial function} from $X$ to $Y$
  is a subset $f$ of $X \times Y$ validating
\begin{equation*}
  (x, y) \in f \text{ and } (x, z) \in f \implies y = z.
\end{equation*}
If $X = Y$ then $f$ is called simply a partial function on $X$.  Given
a partial function $f$ from $X$ to $Y$, its \defn{domain} is the set
\[\dom(f) \coloneqq \{x \in X \mid \exists \ y \in Y \colon (x, y) \in f\}.\]
\end{definition}

\begin{definition} 
  An \defn{algebra of partial functions} of the signature $\{-,
  \rest\}$ is a universal algebra $\algebra A = (A, -, \rest)$ where
  the elements of the universe $A$ are partial functions from some
  (common) set $X$ to some (common) set $Y$ and the interpretations of
  the symbols are given as follows:
\begin{itemize}

\item
The binary operation $-$ is \defn{relative complement}:
\[f - g \coloneqq \{(x, y) \in X \times Y \mid (x, y) \in f \text{ and
  } (x, y) \not\in g\}.\]

\item
The binary operation $\rest$ is \defn{domain restriction}.\begin{NoHyper}\footnote{This operation has historically been called \emph{restrictive multiplication}, where \emph{multiplication} is the historical term for \emph{composition}. But we do not wish to emphasise this operation as a form of composition.}\end{NoHyper} It is the restriction of the second argument to the domain of the first; that is:
\[ f \rest g \coloneqq \{(x, y) \in X \times Y \mid x \in \dom(f)
  \text{ and } (x, y) \in g\}\text{.}\]
\end{itemize}
\end{definition}
Note that in algebras of partial functions of the signature $\{-,
\rest\}$, the set-theoretic intersection of two elements $f$ and $g$ can be
expressed as $f - (f - g)$. We use the symbol~$\cdot$ for this derived
operation.

We also observe that, without loss of generality, we may assume $X =
Y$ (a common stipulation for algebras of partial functions). Indeed,
if $\cA$ is a $\{-, \rest\}$-algebra of partial functions from $X$ to
$Y$, then it is also a $\{-, \rest\}$-algebra of partial functions
from $X \cup Y$ to $X \cup Y$. In this case, this
non-uniquely-determined single set is called `the'
\defn{base}. However, certain properties may fail to be preserved by
changing the base. For instance, given sets $X$ and $X'$, if $f$ qualifies both as a partial function on $X$ and a partial function on $X'$, while it is true that $f$ is injective as a partial function on~$X$ if and only if it
is injective as a partial function on~$X'$, this is not the case for
surjectivity.

The collection of \emph{all} partial functions on some base $X$ is closed under relative complement and domain restriction, and thus gives an algebra of partial functions $\mathcal{PF}(X)$.

\begin{definition}
  An algebra $\algebra A$ of the signature $\{ -, \rest\}$ is
  \defn{representable} (by partial functions) if it is isomorphic to
  an algebra of partial functions, equivalently, if it is embeddable into $\mathcal{PF}(X)$ for some set $X$. Such an embedding of $\algebra A$ is a
  \defn{representation} of $\algebra A$ (as an algebra of partial
  functions).
\end{definition}

Just as for algebras of partial functions, for any $\{-, \rest\}$-algebra
$\cA$, we will consider the derived operation $\bmeet$ defined by
\begin{equation*}
  \label{complement}\tag{I}
  a \bmeet b := a - (a - b).
\end{equation*}

Algebras of partial functions of many other signatures have been investigated, and the corresponding representation classes axiomatised, often (but not always) finitely, and often (but not always) with equations. We will not enumerate all these results here, but for a treatment of some of the most expressive signatures to have been considered, see \cite{hirsch}. For a relatively comprehensive guide to this literature, see \cite[\S 3.2]{phdthesis}. 

 Focusing on signatures that, like ours, do not contain composition, first consider the signature $\{\rest, \pref\}$ (incomparable with ours), where $\pref$ is the operation known as \emph{preferential union} or alternatively as \emph{override}. Here, the representation class is precisely the right-handed strongly distributive skew lattices  \cite{LeechJonathan1992Nsl}, and thus finitely axiomatisable by equations.  See \cite{Bauer_2013} for the definition of right-handed strongly distributive skew lattices, where a duality theorem for this class is proven. 
 In \cite{JACKSON2021106532}, a finite equational axiomatisation is given for the signature---also incomparable with ours---of preferential union and \emph{update}. The paper \cite{BERENDSEN2010141} gives a finite equational axiomatisation for the signature $\{-, \pref\}$, which is more expressive than each of the three other signatures (ours and the two just mentioned).

In \Cref{sec:rep} we shall see (\Cref{axiomatisation}) that the
class of $\{-, \rest\}$-algebras that is representable by partial
functions is the variety axiomatised by the following set of
equations.
\begin{enumerate}[label = (Ax.\arabic*), leftmargin = *]
\item \label{schein1} $a - (b - a) = a$
\item \label{commutative} 
$a \bmeet b = b \bmeet a$
\item \label{schein3} $(a - b) - c = (a - c) - b$
\item \label{eq:8} $(a \rest c)\bmeet(b \rest c) = (a \rest b) \rest
  c$
\item \label{lifting}$ (a \bmeet b) \rest a = a \bmeet b$
\end{enumerate}
We call the $\{-, \rest\}$-algebras satisfying these axioms \defn{difference--restriction algebras}.

Algebras of the signature $\{-\}$ validating axioms \ref{schein1} --
\ref{schein3} are called \defn{subtraction algebras}
\cite{abbott1969sets}. It is known that these equations axiomatise the
$\{-\}$-algebras representable as an algebra of sets equipped with
relative complement (see, for example, \cite[Theorem~1 +
Example~(2)]{schein1992difference}). Hence \ref{schein1} --
\ref{schein3} are sound for $\{ -, \rest\}$-algebras of partial
functions and therefore for all representable $\{ -,
\rest\}$-algebras. We also know immediately that any (isomorphism
invariant) property of sets with~$-$ is a consequence of \ref{schein1}
-- \ref{schein3}.  One particular consequence that will be often used
in the rest of the paper without further mention is that the derived
operation $\bmeet$ provides the structure of a semilattice with bottom
$0 \coloneqq a - a$ (independent of choice of $a$). Similarly, we will
also use the fact that $0$ acts as a right identity for $-$ without
further remark. Three further properties that we will find useful are
the following.
\begin{align}
  \label{disjoint_0} b \bmeet (a - b) &= 0\\
  \label{eq:9} a - (a\bmeet b) & = a-b\\
  \label{eq:10} a\bmeet (b-c) &= (a\bmeet b)-c
\end{align}

We also observe that axioms~\ref{eq:8} and~\ref{lifting} are stated
without explicitly using the operation~$-$. It turns out that many
results in this paper do not depend on the algebraic properties
of~$-$, but only on the semilattice operation $\cdot$ it defines. For
that reason, we will use the name \defn{restriction semilattice} for
algebras over the signature $\{\cdot, \rest\}$ whose
$\{\cdot\}$-reduct is a semilattice and that satisfy axioms \ref{eq:8}
and \ref{lifting}. Note that, in general, a restriction semilattice
may not have a bottom element.

In the remainder of this section, we start by verifying that axioms
\ref{eq:8} and \ref{lifting} are also sound for representable $\{ -,
\rest\}$-algebras, thereby showing that every representable $\{-,
\rest\}$-algebra has a restriction semilattice structure.
We will also derive some algebraic consequences of \ref{schein1} --
\ref{lifting} that will be useful in the sequel.
\begin{lemma}\label{axioms}
  Axioms~\ref{eq:8} and \ref{lifting} are sound for all $\{ -,
  \rest\}$-algebras
  of partial functions and therefore for all representable $\{ -,
  \rest\}$-algebras.
\end{lemma}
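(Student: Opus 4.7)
The plan is to verify each axiom by a direct set-theoretic calculation, unfolding the definitions of $\rest$ and of the derived intersection $\bmeet$, which for partial functions is just $\cap$. Both computations are routine, but the second requires using that the elements of the algebra are functions, not merely binary relations.

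For \ref{eq:8}, I would compute both sides of $(a \rest c) \bmeet (b \rest c) = (a \rest b) \rest c$ as subsets of $X \times Y$. The left-hand side is
\begin{equation*}
(a \rest c) \cap (b \rest c) = \{(x,y) \mid x \in \dom(a) \text{ and } x \in \dom(b) \text{ and } (x,y) \in c\}\text{.}
\end{equation*}
For the right-hand side, I first note that $\dom(a \rest b) = \dom(a) \cap \dom(b)$: indeed, $x \in \dom(a \rest b)$ iff there exists $y$ with $x \in \dom(a)$ and $(x,y) \in b$, iff $x \in \dom(a) \cap \dom(b)$. It then follows that $(a \rest b) \rest c$ has exactly the same description as above.

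For \ref{lifting}, I need $(a \bmeet b) \rest a = a \bmeet b$, i.e.\ $(a \cap b) \rest a = a \cap b$. Unfolding, the left-hand side is $\{(x,y) \mid x \in \dom(a \cap b) \text{ and } (x,y) \in a\}$. The inclusion $\supseteq$ of the claimed equality is immediate from the definitions. For the inclusion $\subseteq$, suppose $(x,y)$ lies in the left-hand side. Since $x \in \dom(a \cap b)$, pick $y'$ with $(x,y') \in a \cap b$; in particular $(x,y') \in a$. Since $a$ is a partial function and $(x,y), (x,y') \in a$, we have $y = y'$, so $(x,y) \in a \cap b$ as required. This step (where functionality is used) is the only non-symbol-pushing part of the argument, and is the reason \ref{lifting} fails for arbitrary binary relations.

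Finally, since representable algebras are by definition isomorphic to algebras of partial functions, the soundness of each equation for all $\{-,\rest\}$-algebras of partial functions immediately transfers to all representable $\{-,\rest\}$-algebras.
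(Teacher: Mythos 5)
Your proposal is correct and follows essentially the same route as the paper: for \ref{eq:8} both arguments reduce to the observation that $\dom(a \rest b) = \dom(a) \cap \dom(b)$, and for \ref{lifting} both use functionality of $a$ to identify the witness $y'$ (the paper's $z$) with $y$, with the converse inclusion being immediate. Nothing further is needed.
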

\begin{proof}
  For~\ref{eq:8}, we observe that $(x,y) \in (a \rest c) \bmeet (b
  \rest c)$ exactly when $x$ belongs both to the domain of $a$ and to
  that of $b$, and $(x, y) \in c$. But having $x \in \dom(b)$ amounts
  to having $(x,z) \in b$ for some $z$, and thus we may conclude that
  \[\dom(a) \cap \dom(b) = \dom(a \rest b).\]
  This leads to the desired equality.

  Finally, for~\ref{lifting}, suppose $(x, y) \in (a \bmeet b) \rest
  a$. Then $(x, y) \in a$ and $x$ is in the domain of~$a \bmeet b$. By
  the later fact, there is some $z$ with $(x, z)$ in both $a$ and
  $b$. But since $a$ is a function $z$ must equal $y$. Hence $(x, y)
  \in a \bmeet b$. Conversely, if $(x, y) \in a \bmeet b$, then
  clearly $x \in \dom(a \bmeet b)$ and $(x, y) \in a$, so that $(x, y)
  \in (a \bmeet b) \rest a$.  
\end{proof}
We observe that axioms \ref{schein1} -- \ref{eq:8} are valid not
only for functions, but for arbitrary binary relations. However, the
validity of \ref{lifting} relies on $a$ being a function.

\begin{proposition}
  In a restriction semilattice, the following hold:
  \begin{align}
    \label{restricts}b \rest a &\leq a\\
    \label{eq:12} a \rest (b \rest c) & = (a \rest b) \rest c \\
    \label{eq:4} (a \rest b) \rest (a \bmeet b) & = a \bmeet b\\
    \label{eq:14} a \rest (b \bmeet c)& = (a \rest b) \bmeet c \\
    \label{left}(a \leq b, \ c \leq d)\limplies a \rest c &\leq b \rest d
  \end{align}
  In a difference--restriction algebra, we also have:
  \begin{equation}
    \label{eq:13} (a \rest b) - c  = a \rest (b - c)
  \end{equation}
\end{proposition}
\begin{proof}
  First we observe that the following equality holds:
  \begin{equation}
    \label{eq:18}
    (a \rest  (b \rest c))\bmeet (b \rest  (b \rest c))\bmeet (c \rest
    (b \rest c)) = (a \rest c) \bmeet (b \rest c).
  \end{equation}
  Indeed, by successively using~\ref{eq:8}, we can rewrite the
  left-hand side as
  \[((a \rest b) \rest (b \rest c))\bmeet (c \rest (b \rest c)) = ((a
    \rest b)\rest c) \rest (b \rest c) = ((a \rest c)\bmeet (b \rest
    c)) \rest (b \rest c),\]
  and by commutativity of $\cdot$ and~\ref{lifting}, this is precisely $(a
  \rest c) \bmeet (b \rest c)$.

  In what follows, we will freely use that $\bmeet$ is a semilattice
  operation, that is, $\bmeet$ is idempotent and commutative. We will
  also use that $\rest$ is idempotent, which is a consequence of
  \ref{lifting} and idempotency of the $\bmeet$ operation.
  \begin{description}
  \item[\eqref{restricts}:] This inequality translates into the
    equality $a \bmeet (b \rest a) = b \rest a$. Taking $a = c$
    in~\eqref{eq:18}, we have:
    \begin{align*}
      a \bmeet (b \rest a)
      & = (a\rest a) \bmeet (b \rest a) = (a \rest  (b \rest a))\bmeet
        (b \rest  (b \rest a))\bmeet (a \rest(b \rest a))
      \\ & = (b \rest  (b \rest a))\bmeet (a \rest(b \rest a)) \just
           {\ref{eq:8}} = (b \rest a) \rest (b \rest a) = (b \rest
           a).
    \end{align*}
  \item[\eqref{eq:12}:] By~\ref{eq:8}, $(a \rest b) \rest c$
    equals the left-hand side of~\eqref{eq:18}. We prove that so does
    $a \rest (b \rest c)$:
    \[(a \rest (b \rest c)) \bmeet (b \rest (b \rest c)) \bmeet (c
      \rest (b \rest c)) \just {\ref{eq:8}} = (a \rest (b \rest c))
      \bmeet ((b \rest c) \rest (b \rest c)) \just{\eqref{restricts}}=
      a \rest (b \rest c).\]
  \item[\eqref{eq:4}:] We first observe that $b \rest (a \rest b) = a
    \rest b$. Indeed, we may compute:
    \begin{align}\label{eq:2.11}
      b \rest (a \rest b)
      & \just{\eqref{eq:12}} = (b \rest a)\rest b \just{\ref{eq:8}}
        = (b\rest b)\bmeet (a \rest b) \just {\eqref{restricts}}=
        a\rest b.
    \end{align}
    Then we can compute:
    \begin{align*}
      (a \rest b) \rest
      (a \bmeet b) &\just{\ref{eq:8}}=(a \rest (a \bmeet b))\bmeet(b \rest (a \bmeet b)) \\&\just{\ref{lifting}}= (a \rest ((a \bmeet b) \rest a)) \bmeet (b \rest ((a \bmeet b) \rest b)) \\&\just{\eqref{eq:2.11}}= ((a\bmeet b) \rest a) \bmeet ((a\bmeet b) \rest
      b)
      \\ &\just{\ref{lifting}}=a \bmeet b.
    \end{align*}
  \item[\eqref{eq:14}:] This follows from:
    \begin{align*}
      a \rest (b \bmeet c)
      & \just{\ref{lifting}} = a \rest ((b \bmeet c) \rest b)
        \just {\eqref{eq:12}} = (a \rest (b\bmeet c)) \rest b
      \\ & \just {\ref{eq:8}} = (a \rest b) \bmeet ((b \bmeet c) \rest b)
           \just {\ref{lifting}} = (a \rest b) \bmeet (b \bmeet c) \just
           {\eqref{restricts}} = (a \rest b) \bmeet c.
    \end{align*}
  \item[\eqref{left}:] Let $a\leq b$ and $c \leq d$. The desired
    inequality is a consequence of combining the inequalities $a\rest
    c \leq b \rest c$ and $b \rest c \leq b \rest d$. While the former
    is a trivial consequence of~\ref{eq:8} and the fact that $a\rest
    b = a$ implied by~\ref{lifting}, the latter may be derived as
    follows:
    \begin{align*}
      (b \rest c)\bmeet (b \rest d)
      & \just{\ref{lifting}} = (b \rest (c\rest d))\bmeet (b \rest
        d) \just{\eqref{eq:12}} = ((b \rest c)\rest d)\bmeet (b \rest
        d)
      \\ & \just{\ref{eq:8}} = (b \rest(b \rest
           c))\rest d \just{\ref{lifting}, \eqref{eq:12}} = b
           \rest (c\rest d) \just{\ref{lifting}} = b \rest c.
    \end{align*}
  \item[\eqref{eq:13}:] We have:
    \begin{align*}
      (a \rest b) - c &\just{\hspace{2pt}\eqref{restricts}\hspace{2pt}} = (b \bmeet (a \rest b)) - c
                        \just{\eqref{complement}} = (b - (b - (a \rest b))) - c\\
                      &\just{\ref{schein3}} = (b - c) - (b - (a \rest b))
                        = (b - c) - ((b - c) - (a \rest b))\\
                      &\just{\hspace{5.7pt}\eqref{complement}\hspace{5.7pt}} = (a \rest b) \bmeet (b - c)\just {\eqref{eq:14}} = a \rest (b \bmeet (b - c)) = a \rest (b - c),
    \end{align*}
    where the unmarked equalities follow from evident properties of
    sets with relative complement.\qedhere
  \end{description}
\end{proof}

Since, by property~\eqref{eq:12}, the operation $\rest$ is
associative, from here on we may write $a \rest b \rest c$ instead of
$(a \rest b) \rest c$ or $a \rest (b \rest c)$.

For any poset $\cS$ and $a \in \cS$, the notation $a^\downarrow$
denotes the set $\{b \in \cS \mid b \leq a\}$. It is known that in any subtraction algebra $\cS$, for any $a \in \cS$, the set $a^\downarrow$, with least element
  $0$, greatest element $a$, meet given by $\bmeet$ and
  complementation given by $\compl{b} \coloneqq a - b$ is a Boolean
  algebra \cite[page~2154]{schein1992difference}. We note the following corollary.

\begin{corollary}\label{c:3}
  If $h: \cS \to \cT$ is a homomorphism of subtraction algebras then, for every $a \in \cS$,
  the map $h$ induces a homomorphism of Boolean algebras $h_a:
  a^\downarrow \to h(a)^\downarrow$.
\end{corollary}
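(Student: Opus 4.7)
The plan is to verify three things in order: that $h$ restricts to a well-defined map $h_a \colon a^\downarrow \to h(a)^\downarrow$, that this restriction preserves the Boolean operations on these intervals, and that nothing more needs to be checked thanks to the description of the interval Boolean algebras cited from \cite{schein1992difference}.

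First I would observe that because $h$ preserves $-$, it also preserves the derived operation $\bmeet$ defined by $x \bmeet y = x - (x - y)$, and preserves $0 = x - x$. In particular, $h$ is order preserving: if $b \leq a$, meaning $a \bmeet b = b$, then applying $h$ gives $h(a) \bmeet h(b) = h(b)$, so $h(b) \leq h(a)$. Hence $h[a^\downarrow] \subseteq h(a)^\downarrow$ and $h_a$ is well defined.

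Next I would check each piece of the Boolean structure. The meet in $a^\downarrow$ and in $h(a)^\downarrow$ is just $\bmeet$ inherited from the ambient subtraction algebra, so preservation of meet by $h_a$ is immediate from preservation of $\bmeet$ by $h$. The bottom element of both intervals is $0$, and $h(0) = 0$ as noted above. The top of $a^\downarrow$ is $a$ and $h_a(a) = h(a)$ is the top of $h(a)^\downarrow$. Finally, for complementation in the interval, the complement of $b \in a^\downarrow$ is $\compl{b} = a - b$, and
\[
h_a(\compl{b}) \;=\; h(a - b) \;=\; h(a) - h(b) \;=\; \compl{h_a(b)},
\]
where the last complement is taken in $h(a)^\downarrow$.

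I do not expect any real obstacle: the content is simply that the Boolean structure on each interval is definable from the subtraction-algebra operations together with the parameter $a$, and homomorphisms of subtraction algebras transport such definable structure. The only potential pitfall would be forgetting to confirm well-definedness (that $h$ actually sends $a^\downarrow$ into $h(a)^\downarrow$), which the order-preservation argument above handles.
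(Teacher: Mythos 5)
Your proposal is correct and follows exactly the argument the paper intends: the corollary is stated without proof precisely because the Boolean structure on each interval ($\bmeet$, $0$, $a$, and $\compl{b}=a-b$) is term-definable from $-$ with parameter $a$, so a subtraction-algebra homomorphism transports it, which is what you verify piece by piece (including the well-definedness check via order preservation). Nothing further is needed, since join in the intervals is definable from meet and complement.
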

  
We can also prove a sort of converse to each $a^\downarrow$ of a
subtraction algebra being a Boolean algebra.

\begin{proposition}\label{p:1}
  Suppose that $(\cS, \cdot)$ is a meet-semilattice with bottom~$0$
  such that, for every $a \in \cS$, there is a unary operation
  $\overline{\phantom a}^a$ on $a^{\downarrow}$ such that $(a^{\downarrow},  0,
  a, \cdot, \overline{\phantom a}^a)$ is a Boolean algebra. Then setting $a-b :=
  \overline{a \cdot b}^a$ defines a subtraction algebra structure on
  $\cS$ (on which \eqref{complement} becomes a valid equation).
\end{proposition}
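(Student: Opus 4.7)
The plan is to verify \ref{schein1}, \ref{commutative}, \ref{schein3} and equation \eqref{complement} for the proposed operation $a - b := \overline{a \cdot b}^a$. Axiom \ref{commutative} will fall out for free once \eqref{complement} is shown: the derived $\bmeet$ will then coincide with the given semilattice meet $\cdot$, which is commutative by hypothesis.

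The central preparatory step is the following coherence formula relating complements in different principal down-sets:
\[\overline{x}^b = b \cdot \overline{x}^a \quad \text{whenever } x \leq b \leq a.\]
To establish it, I would first observe that joins in $b^\downarrow$ coincide with joins in $a^\downarrow$, because any common upper bound in $b^\downarrow$ is one in $a^\downarrow$, and conversely the $a^\downarrow$-join of elements of $b^\downarrow$ lies below $b$ and hence back in $b^\downarrow$. Then a direct calculation using distributivity in $a^\downarrow$ shows that $b \cdot \overline{x}^a$ meets $x$ to $0$ and joins with $x$ to $b \cdot (x \vee \overline{x}^a) = b \cdot a = b$, so by uniqueness of complements in $b^\downarrow$ it must equal $\overline{x}^b$.

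With coherence in hand, equation \eqref{complement} follows by unpacking $a - (a - b) = \overline{a \cdot \overline{a \cdot b}^a}^a$: since $\overline{a \cdot b}^a \leq a$, the outer meet with $a$ is redundant, and double complementation in $a^\downarrow$ returns $a \cdot b$. For \ref{schein1}, writing $y := \overline{a \cdot b}^b$, the key claim is $a \cdot y = 0$: from $y \leq b$ one has $a \cdot y \leq a \cdot b$, and $y \cdot (a \cdot b) = 0$ then forces $a \cdot y = (a \cdot y) \cdot (a \cdot b) = 0$; hence $a - (b - a) = \overline{0}^a = a$. For \ref{schein3}, set $x := a - b$, so $x \leq a$, and use coherence to write $(a - b) - c = \overline{x \cdot c}^x = x \cdot \overline{x \cdot c}^a$. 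Since $x \leq a$ gives $x \cdot c = x \cdot (a \cdot c)$, the standard Boolean identity $x \cdot \overline{x \cdot y}^a = x \cdot \overline{y}^a$ (for $y \leq a$) reduces this further to $\overline{a \cdot b}^a \cdot \overline{a \cdot c}^a$, which is visibly symmetric in $b$ and $c$.

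The main (though mild) obstacle is the coherence formula: the hypothesis posits only that each $a^\downarrow$ is \emph{independently} a Boolean algebra, so one must verify that these structures are mutually consistent enough for complementation to be transported between different down-sets. Once that is in place, all the remaining verifications are routine Boolean-algebra manipulations taking place inside a single $a^\downarrow$.
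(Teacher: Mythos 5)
Your proposal is correct and takes essentially the same route as the paper: establish \eqref{complement} so that the derived $\bmeet$ coincides with the given meet (yielding \ref{commutative}), verify \ref{schein1} by showing $a \cdot \overline{(a\cdot b)}^{b} = 0$, and verify \ref{schein3} by transporting the complement from $(a-b)^{\downarrow}$ up to $a^{\downarrow}$ and noting that the resulting expression $\overline{(a\cdot b)}^{a} \cdot \overline{(a\cdot c)}^{a}$ is symmetric in $b$ and $c$. Your explicit coherence formula $\overline{x}^{b} = b \cdot \overline{x}^{a}$ is exactly the fact the paper invokes implicitly (that the complement on a downset $d^{\downarrow}$ of a Boolean algebra is $e \mapsto d \cdot \overline{e}$, identified with the hypothesized structure via uniqueness of complements), so spelling out its proof is a welcome but inessential refinement rather than a different argument.
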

\begin{proof}
First we argue that  $\bmeet$ is the operation obtained from the term $a - (a - b)$. Now $(a -b)$ is by definition in $a^\downarrow$, so $a - (a -b) = \overline{a \cdot (a-b)}^a = \overline{a-b}^a$, which is the complement of the complement of $a \bmeet b$ in $a^\downarrow$, that is, equals $a \bmeet b$. As a consequence, the validity of \ref{commutative}, which is formally a statement about $-$, is immediate.

For the validity of \ref{schein1}, the term $a - (b - a)$ is by definition $\overline{a \bmeet\overline{b\bmeet a}^b}^a$. We calculate $a \bmeet\overline{b\bmeet a}^b$. Now $\overline{b\bmeet a}^b$ is by definition less than or equal to $b$, so $a \bmeet\overline{b\bmeet a}^b = a \bmeet b\bmeet\overline{b\bmeet a}^b$. But $\overline{b\bmeet a}^b$ is the complement of $a \bmeet b$ in $b^\downarrow$, so $a \bmeet b\bmeet \overline{b\bmeet a}^b = 0$. Hence $a - (b - a) = \overline{0}^a = a$.

For the validity of \ref{schein3}, we start from the term $(a-b) - c$. We may assume $b$ and $c$ are in $a^\downarrow$ since $a-b = \overline{a \bmeet b}^a = \overline{a \bmeet a \bmeet b}^a = a - (a \bmeet b)$ and $(a-b) - c = \overline{(a-b) \bmeet c}^a = \overline{(a-b) \bmeet a \bmeet c}^a = (a-b) - (a \bmeet c)$. We write simply $\overline{\phantom c}$ for the complement in $a^\downarrow$, and $+$ for the join. Then $(a - b) - c = \overline b - c = \overline{\overline b \bmeet c}^{\overline b}$. In any Boolean algebra, the complement operation on the induced Boolean algebra $d^\downarrow$ is given by $e \mapsto  d \bmeet \overline e$. So as $\overline b$ is an element of the Boolean algebra $a^\downarrow$, the complement on $\overline{b}^\downarrow$ is given by $d \mapsto \overline b \bmeet \overline d$. Hence $\overline{\overline b \bmeet c}^{\overline b} = \overline b \bmeet (b + \overline c) = \overline b \bmeet \overline c$. It is now clear, by commutativity of $\bmeet$ and symmetry, that this is equal to $(a-c) - b$. 
\end{proof}

\section{The algebra of domains}\label{sec:domains}

Throughout this section, let $\algebra A$ be a
restriction semilattice. Although we do not have the \emph{domain
  operation} in our signature, the signature is expressive enough that
it can express the `domain inclusion' relation. In this section, we
will begin to investigate this implicit domain information.

\begin{definition}\label{sec:1}
  Define the relation $\preceq_\cA$ on $\algebra A$ by $a \preceq_\cA
b \iff a \leq b \rest a$.
\end{definition} We often drop the subscript $\algebra A$. Notice
that, by~\eqref{restricts}, we have $a \preceq b$ if and only if $a =
b \rest a$.  When $\cA$ is an algebra of partial functions it is not
hard to see that, for every $f, g \in \cA$, we have $f \preceq g$
exactly when $\dom(f) \subseteq \dom(g)$.

\begin{lemma}\label{l:3}
  The following statements hold:
  \begin{enumerate}[label = (\alph*)]
  \item\label{item:5} $\preceq$ is a preorder on $\cA$ that contains
    $\leq$;
  \item\label{item:7} if $\cA$ has a bottom element~$0$, then for
    every $a \in \cA$, if $a \preceq 0$, then $a = 0$.
  \end{enumerate}
\end{lemma}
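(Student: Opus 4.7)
The plan is to exploit the characterisation $a \preceq b \iff a = b \rest a$ (from~\eqref{restricts}) throughout and reduce everything to the algebraic facts about $\rest$ already established in the previous proposition.

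For part~\ref{item:5}, I would verify the three required properties separately. Reflexivity $a \preceq a$ is exactly idempotency of $\rest$, which the text has already noted follows from~\ref{lifting} plus idempotency of $\bmeet$. For transitivity, suppose $a \preceq b$ and $b \preceq c$, i.e.\ $a = b \rest a$ and $b = c \rest b$. Then a direct calculation
\[
c \rest a \;=\; c \rest (b \rest a) \;\overset{\eqref{eq:12}}{=}\; (c \rest b) \rest a \;=\; b \rest a \;=\; a
\]
gives $a \preceq c$. For the containment $\leq\,\subseteq\,\preceq$, assume $a \leq b$. Combining monotonicity \eqref{left} (applied with the trivial $a \leq a$) with $a \rest a = a$ yields $a \leq b \rest a$, which is the definition of $a \preceq b$.

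For part~\ref{item:7}, the crux is the identity $0 \rest a = 0$: once this is in hand, $a \preceq 0$ unfolds to $a = 0 \rest a = 0$. To prove $0 \rest a = 0$, I would instantiate \ref{lifting} at $b = 0$. Since $0$ is the bottom of the $\bmeet$-semilattice, $a \bmeet 0 = 0$, so the axiom reads
\[
0 \rest a \;=\; (a \bmeet 0) \rest a \;\overset{\ref{lifting}}{=}\; a \bmeet 0 \;=\; 0,
\]
as required.

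I do not expect any genuine obstacle in this lemma; both clauses reduce to bookkeeping with the identities derived in the preceding proposition. The only subtlety worth flagging is part~\ref{item:7}: one might be tempted to appeal to \eqref{restricts} to get $0 \rest a \leq a$, but that is the wrong direction. Observing that \ref{lifting} actually computes $0 \rest a$ on the nose (via $a \bmeet 0 = 0$) is the key move.
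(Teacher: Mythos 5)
Your proof is correct and follows essentially the same route as the paper: the same three-step chain $c \rest a = c \rest b \rest a = b \rest a = a$ for transitivity, monotonicity \eqref{left} together with $a \rest a = a$ for the containment of $\leq$, and the identity $0 \rest a = 0$ (which the paper just attributes to \ref{lifting} and you spell out via $a \bmeet 0 = 0$) for part (b).
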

\begin{proof}
  The fact that $\preceq$ is reflexive, that is, that $a = a \rest a$
  for every~$a \in \cA$, follows from~\ref{lifting}. To prove
  that $\preceq$ is transitive, let $a,b, c \in \cA$ be such that $a
  \preceq b$ and $b \preceq c$. Then we have
  \begin{align*}
    c \rest a \just{a \preceq b} = c \rest b \rest a \just{b \preceq
    c} = b \rest a \just {a \preceq b} = a,
  \end{align*}
  and thus $a \preceq c$.
  To see that $\preceq$ contains $\leq$ let $a, b \in \cA$ be such
  that $a \leq b$. Since, by~\eqref{left}, the operation $\rest$ is
  order preserving, we have $a = a \rest a \leq b \rest a$, and hence $a
  \preceq b$. This proves~\ref{item:5}. For~\ref{item:7}, we observe
  that $a \preceq 0$ means $a \leq 0\rest a$. But,
  by~\ref{lifting}, we have $0 \rest a = 0$, and thus $a = 0$.
\end{proof}

Observe that every homomorphism of $\{\cdot, \rest\}$-algebras is
$\preceq$-preserving, since $\preceq$ is defined by an equation.

We denote by $\sim_\cA$ the equivalence relation induced by
$\preceq_\cA$, and for a given $a \in \cA$ we use $[a]$ to denote the
equivalence class of~$a$. The canonical projection $\cA
\twoheadrightarrow \cA/{\sim_\cA}$ is denoted by~$\pi_\cA$. As with $\preceq$, if $\cA$
is clear from context, we denote $\sim_\cA$ and
$\pi_\cA$ by $\sim$ and $\pi$, respectively.  Given a
subset $S \subseteq \cA$, we may use $S/{\sim}$ to denote the forward
image $\pi[S]$ of~$S$ under~$\pi$.

\begin{lemma}\label{l:2}
  The poset $\algebra A / {\sim}$ of $\sim$-equivalence classes (with order inherited from $\preceq$) is a
  meet-semilattice with
  \begin{equation}
    [a] \wedge [b] = [a \rest b].\label{eq:2}
  \end{equation}
  In particular, we have $a^\downarrow/{\sim} = [a]^{\downarrow}$.
\end{lemma}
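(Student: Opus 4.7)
The quotient of any preorder by its induced equivalence is automatically a poset, so $\algebra A/{\sim}$ is a poset, and it suffices to prove: (i)~$a\rest b$ is a greatest lower bound of $a$ and $b$ in the preorder $(\cA,\preceq)$; (ii)~$a^\downarrow/{\sim}=[a]^\downarrow$. Well-definedness of~\eqref{eq:2} then follows, since a GLB in a preorder is determined up to $\sim$.

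For (i), I would first verify that $a\rest b$ lies below both $a$ and $b$ under $\preceq$. Since $\preceq$ is characterised by $x\preceq y \iff x = y\rest x$, this amounts to the two identities
\[a\rest(a\rest b) = a\rest b \qquad\text{and}\qquad b\rest(a\rest b)=a\rest b.\]
The first follows from \eqref{eq:12} together with the idempotency of $\rest$ noted before \eqref{eq:18}. The second is precisely the identity \eqref{eq:2.11} established in the proof of \eqref{eq:4}. For the greatest-lower-bound property, suppose $c\preceq a$ and $c\preceq b$; I need $c = (a\rest b)\rest c$. Using associativity \eqref{eq:12}, the right-hand side equals $a\rest(b\rest c)$, which by $c\preceq b$ reduces to $a\rest c$, and finally by $c\preceq a$ reduces to $c$.

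For (ii), the inclusion $a^\downarrow/{\sim}\subseteq [a]^\downarrow$ is immediate from \Cref{l:3}\ref{item:5}: if $b\leq a$ then $b\preceq a$, so $[b]\leq[a]$. For the reverse inclusion, given $[c]\leq[a]$, that is $c\preceq a$, I propose the representative $c\rest a$. By \eqref{restricts} we have $c\rest a\leq a$, so $c\rest a\in a^\downarrow$. It remains to check $c\sim c\rest a$. One direction, $c\rest a\preceq c$, amounts to $(c\rest a) = c\rest(c\rest a)$, which follows from associativity and idempotency of $\rest$. The other direction, $c\preceq c\rest a$, amounts to $c = (c\rest a)\rest c$; this I would derive by associativity, \ref{eq:8}, and the hypothesis $c = a\rest c$, so that $(c\rest a)\rest c = (c\rest c)\bmeet(a\rest c) = c\bmeet c = c$.

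None of these steps looks to be a serious obstacle: each is a short calculation in the axioms and the derived identities already established. The only part requiring a moment's thought is choosing the right representative $c\rest a$ (rather than, say, $a\bmeet c$, which fails to lie in~$[c]$ in general) to witness membership of $[c]$ in $a^\downarrow/{\sim}$.
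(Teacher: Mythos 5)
Your proposal is correct and follows essentially the same route as the paper: show $a\rest b$ is a $\preceq$-greatest lower bound of $a$ and $b$ (the same computations, e.g.\ $(a\rest b)\rest c = a\rest(b\rest c)=a\rest c=c$), and for the second assertion use the representative $b\rest a\in a^\downarrow$ exactly as the paper does. The only cosmetic differences are that you verify $c\sim c\rest a$ by two direct calculations where the paper simply invokes the just-proved \eqref{eq:2}, and you use \eqref{eq:2.11} where the paper uses \eqref{restricts} together with $\leq\,\subseteq\,\preceq$.
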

\begin{proof}
  The fact that $[a \rest b]$ is a lower bound of $[a]$, that is, that
  the inequality $a \rest b \leq a \rest a \rest b$ holds, is a
  consequence of having $a \rest a = a$ by~\ref{lifting}. In turn, $[a
  \rest b]$ is a lower bound of $[b]$ thanks to~\eqref{restricts} and
  the fact that $\leq$ is contained in $\preceq$, by \Cref{l:3}.
  Suppose that $[c] \preceq [a], [b]$, that is, $c = a \rest c = b
  \rest c$. Then we may compute
  \[a \rest b \rest c = a \rest c = c,\]
  hence $[c] \preceq [a \rest b]$ and we have~\eqref{eq:2}.

  For the second assertion, by \Cref{l:3} it is clear that
  $a^\downarrow/{\sim} \subseteq [a]^{\downarrow}$. Conversely, let
  $[b] \preceq [a]$. By~\eqref{eq:2}, we have $[b \rest a] = [a]
  \wedge [b] = [b]$, and by~\eqref{restricts} we have $b \rest a \leq
  a$. Thus $[b] \in a^\downarrow/{\sim}$.
\end{proof}

\begin{lemma}\label{l:5}
  The relations $\leq$ and $\preceq$ coincide on each downset
  $a^{\downarrow}$, for $a \in \cA$. In particular, $[a]^\downarrow$
  is order isomorphic to $a^{\downarrow}$.  Explicitly, for every $[b]
  \preceq [a]$, the element $b \rest a$ is the unique element of
  $a^\downarrow$ that is $\sim$-equivalent to~$b$.
\end{lemma}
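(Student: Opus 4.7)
The plan is to reduce everything to the first claim, that $\leq$ and $\preceq$ coincide on $a^\downarrow$, and then obtain the Boolean algebra isomorphism and the uniqueness statement as more or less immediate corollaries via \Cref{l:2}.

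For the first claim, since $\leq$ is already contained in $\preceq$ by \Cref{l:3}\ref{item:5}, I only need to verify the reverse inclusion on $a^\downarrow$. The key observation I would prove first is a small lemma: if $c \leq a$ then $c \rest a = c$. This follows directly from axiom~\ref{lifting}: since $c = a \bmeet c$, applying \ref{lifting} gives $c \rest a = (a \bmeet c) \rest a = a \bmeet c = c$. With this in hand, suppose $b, c \leq a$ and $b \preceq c$, i.e., $b = c \rest b$. Then I would compute
\[
b = c \rest b = c \rest (a \bmeet b) \just{\eqref{eq:14}}{=} (c \rest a) \bmeet b = c \bmeet b,
\]
using that $b \leq a$ and the key lemma. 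Hence $b \leq c$, as desired.

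For the Boolean algebra isomorphism, I would argue as follows. By \Cref{l:2} the canonical projection $\pi$ maps $a^\downarrow$ surjectively onto $[a]^\downarrow$. The first claim immediately gives injectivity: if $b, c \in a^\downarrow$ satisfy $[b] = [c]$, then $b \preceq c$ and $c \preceq b$, so $b \leq c$ and $c \leq b$, hence $b = c$. Moreover, since $\leq$ and $\preceq$ agree on $a^\downarrow$, this bijection is an order isomorphism; combined with the already established fact (recalled before \Cref{c:3}) that $a^\downarrow$ carries a Boolean algebra structure, this transports that structure to $[a]^\downarrow$.

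Finally, for the explicit description, given $[b] \preceq [a]$, \Cref{l:2} yields $[b \rest a] = [a] \wedge [b] = [b]$, so $b \rest a$ is $\sim$-equivalent to $b$, and $b \rest a \leq a$ by~\eqref{restricts}. Uniqueness is precisely the injectivity of $\pi$ on $a^\downarrow$ established above. I don't anticipate a genuine obstacle here: the only subtle step is recognising that the correct way to use $b \leq a$ is to rewrite $b$ as $a \bmeet b$ inside the restriction and then peel off the factor $c \rest a$ via~\eqref{eq:14} and~\ref{lifting}.
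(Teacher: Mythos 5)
Your proposal is correct and follows essentially the same route as the paper: prove that $\preceq$ implies $\leq$ on $a^\downarrow$ via the observation that $c \leq a$ forces $c \rest a = c$ (from \ref{lifting}), then read off the order isomorphism with $[a]^\downarrow$ and the uniqueness of $b \rest a$ from \Cref{l:2}. The only difference is cosmetic: the paper's core computation peels things apart with \ref{eq:8} directly ($x \bmeet y = (y \rest x) \rest a$), whereas you route through the derived identity \eqref{eq:14}, which is equally valid here since $\cA$ satisfies \ref{schein1}--\ref{lifting}.
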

\begin{proof}
  By \Cref{l:3}, we already know that $\preceq$ contains
  $\leq$. Conversely, let $x, y \in a^\downarrow$ be such that $x
  \preceq y$, that is, $y \rest x = x$. Using~\ref{lifting},
  we have $x = x \rest a$ and $y = y \rest a$. Therefore, $x\bmeet y =
  (y \rest a)\bmeet (x \rest a)$, and by~\ref{eq:8} it follows that
  $x \bmeet y = (y \rest x) \rest a$. Using the hypothesis that $x
  \preceq y$ and $x \rest a = x$, we may conclude that $x \leq y$ as
  intended. 
  
   Thus $a^\downarrow$ is order isomorphic to $a^\downarrow / {\sim}$, which by the second part of \Cref{l:2} equals $[a]^\downarrow$. For the last assertion, we saw in the proof of \Cref{l:2} that $b \rest a$ is in $a^\downarrow$ and $\sim$-equivalent to $b$, thus we now know it is the unique such element of $a^\downarrow$.
\end{proof}
\begin{corollary} \label{c:2} 
If $a \leq b$ and $b \preceq a$, then $a = b$. Hence, $a
  < b$ implies $a \prec b$, where $\prec$ denotes the strict relation
  derived from $\preceq$ (that is, $a \prec b$ if and only if $a \preceq
  b$ and $a \not\sim b$).
\end{corollary}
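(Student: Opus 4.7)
The plan is to exploit Lemma \ref{l:5}, which tells us that $\leq$ and $\preceq$ coincide on each principal downset $a^\downarrow$. The strategy is to find a single downset containing both $a$ and $b$ on which we can invoke this coincidence, converting the hypothesis $b \preceq a$ into $b \leq a$ and then using antisymmetry of $\leq$.

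First I would note that, by Lemma \ref{l:3}\ref{item:5}, $a \leq b$ implies $a \preceq b$. Together with the hypothesis $b \preceq a$ this gives $a \sim b$, although we will not really need the $\sim$-formulation; what we really want is the inequality $b \preceq a$ on its own. Now observe that both $a$ and $b$ lie in $b^\downarrow$ (since $a \leq b$ and $b \leq b$). Therefore by Lemma \ref{l:5}, applied to the downset $b^\downarrow$, the relations $\leq$ and $\preceq$ agree on $\{a,b\}$. In particular $b \preceq a$ upgrades to $b \leq a$, and combined with $a \leq b$ we get $a = b$ by antisymmetry of $\leq$ on $\cA$.

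For the second assertion, suppose $a < b$, i.e., $a \leq b$ and $a \neq b$. Then $a \preceq b$ by Lemma \ref{l:3}\ref{item:5}. If we did not have $a \prec b$, then $a \sim b$, meaning also $b \preceq a$; but then the first part of the corollary would force $a = b$, contradicting $a < b$. So $a \prec b$ as required.

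The argument is essentially a one-line appeal to Lemma \ref{l:5}, so there is no real obstacle; the only thing to be careful about is picking the right downset (namely $b^\downarrow$, so that $b \preceq a$ can be converted) rather than $a^\downarrow$, which would not contain $b$.
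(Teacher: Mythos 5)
Your proof is correct and follows essentially the same route as the paper: both use Lemma \ref{l:3} to get $a \preceq b$, place $a$ and $b$ in the downset $b^\downarrow$, and then invoke Lemma \ref{l:5} (the coincidence of $\leq$ and $\preceq$ on $b^\downarrow$) to conclude $a = b$. The second assertion is handled the same way, as an immediate consequence of the first.
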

\begin{proof}
  Pick two elements $a, b$ validating $a \leq b$ and $b \preceq a$. By
  \Cref{l:3}, we know $a$ and $b$ are $\sim$-equivalent, and moreover they are
  both in the downset~$b^\downarrow$. Thus, by \Cref{l:5}, they must be equal.
\end{proof}

For the last result of this section we will assume that $\cA$ is a
difference--restriction algebra.

\begin{corollary}\label{c:1}
  The poset $\cA/{\sim}$ admits a subtraction algebra structure, where the
  operation $-$ is given by
  \[[a]-[b] = [a- (b \rest a)].\]
\end{corollary}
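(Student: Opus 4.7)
The plan is to obtain the subtraction algebra structure on $\cA/{\sim}$ by invoking \Cref{p:1}, and then to verify that the induced difference matches the stated formula. The three ingredients required by \Cref{p:1} are already at hand. First, by \Cref{l:2}, $\cA/{\sim}$ is a meet-semilattice with $[a]\wedge[b]=[a\rest b]$; its bottom is $[0]$, since $0\leq a$ and hence $[0]\preceq[a]$ by \Cref{l:3}\ref{item:5}. Second, by \Cref{l:5}, for every $a\in\cA$ the principal downset $[a]^\downarrow$ is a Boolean algebra, and the map $b\mapsto[b]$ from $a^\downarrow$ to $[a]^\downarrow$ is an isomorphism of Boolean algebras whose inverse sends $[c]\preceq[a]$ to the unique representative $c\rest a\in a^\downarrow$.

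With these in place, \Cref{p:1} immediately endows $\cA/{\sim}$ with a subtraction algebra structure given by
\[[a]-[b]:=\overline{[a]\wedge[b]}^{[a]},\]
where $\overline{(\_)}^{[a]}$ denotes Boolean complementation in $[a]^\downarrow$. It then remains only to verify the explicit formula. By \Cref{l:2}, $[a]\wedge[b]=[a\rest b]=[b\rest a]$, and by \eqref{restricts} we have $b\rest a\leq a$, so $b\rest a$ is the unique representative of this class living in $a^\downarrow$. Transporting the Boolean complement through the isomorphism $a^\downarrow\cong[a]^\downarrow$ of \Cref{l:5} therefore gives
\[\overline{[b\rest a]}^{[a]}=[\,a-(b\rest a)\,],\]
which is exactly the formula claimed.

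The only step with any subtlety is the transport across the isomorphism of \Cref{l:5}; everything else is a direct appeal to results already established in this section together with \Cref{p:1}. I do not anticipate a real obstacle, but care is needed in tracking representatives so that the Boolean complement in $[a]^\downarrow$ is correctly identified with the class of the Boolean complement in $a^\downarrow$.
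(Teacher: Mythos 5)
Your proposal is correct and follows essentially the same route as the paper: it invokes \Cref{p:1} on the meet-semilattice $\cA/{\sim}$ of \Cref{l:2}, uses the Boolean isomorphism $a^\downarrow \cong [a]^\downarrow$ from \Cref{l:5}, and identifies $[a]-[b]$ as the class of the complement $a-(b\rest a)$ of the representative $b\rest a \in a^\downarrow$ (using \eqref{restricts}). Your extra checks (the bottom $[0]$ and the transport of the complement across the isomorphism) are just slightly more explicit versions of steps the paper leaves implicit.
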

\begin{proof}
  Recall that, because $\cA$ admits a subtraction algebra structure,
  for every $a \in \cA$, the poset $a^\downarrow$ is a Boolean algebra
  (see paragraph before \Cref{c:3}).  By \Cref{l:5} we know that, for
  every $a \in \cA$, the poset $[a]^\downarrow$ is a Boolean algebra
  isomorphic to $a^\downarrow$ via the assignment $[b] \mapsto b \rest
  a$. So by \Cref{p:1}, $\cA/{\sim}$ is a subtraction algebra with
  $[a] - [b]$ equal to the complement of $[a]\wedge[b] = [b \rest a]$
  in $[a]^\downarrow$ (recall Lemma~\ref{l:2}). This is indeed $[a-(b
  \rest a)]$, as $b \rest a \in a^\downarrow$ by~\eqref{restricts}.
\end{proof}

\section{Filters}\label{sec:filters}

We continue to let $\cA$ denote a restriction semilattice. Since $\cA$
is in particular a semilattice, the notion of a \emph{filter of
  $\cA$} is well defined.

\begin{definition}
  A subset $F$ of a meet-semilattice $\cS$ is a \defn{filter} if
  \begin{enumerate}[(i)]
  \item $F$ is nonempty,
  \item $F$ is upward closed,
  \item $a, b \in F \implies a \wedge b \in F$.
  \end{enumerate}
  We use $\filt \cS$ to denote the set of all filters of $\cS$.
\end{definition}

We now concentrate on $\filt \cA$. First observe that there is a
natural embedding of sets
\[\iota: \cA \hookrightarrow \filt\cA,\qquad a \mapsto a^\uparrow.\]
For this reason, we will often treat $\cA$ as a subset of $\filt\cA$. 
The operations $\cdot$ and $\rest$ on~$\cA$ may naturally be extended
to operations $\cdot_{\mathsf F}$ and $\rest_{\mathsf F}$ on $\filt{\cA}$ as follows.
Given $F, G \in \filt\cA$ we set
\[F \cdot_{\mathsf F} G := \langle F \cdot G\rangle_{\sf Filt}
  \quad\text{and}\quad F \rest_{\mathsf F} G := \langle F \rest G\rangle_{\sf
    Filt},\]
where $\langle \phantom a\rangle_{\sf
    Filt}$ denotes the well-defined operation `filter generated by\dots'.
We observe now that $\cdot_{\mathsf F}$ and $\rest_{\mathsf F}$ are indeed extensions of $\cdot$ and
$\rest$, respectively. Indeed, while it is easily seen that
$a^\uparrow \cdot_{\mathsf F} b^{\uparrow} = (a \cdot b)^{\uparrow}$, the
equality $a^\uparrow \rest_{\mathsf F} b^{\uparrow} = (a \rest b)^{\uparrow}$
follows from~\eqref{left}. Explicitly, $\cdot_{\mathsf F}$ and $\rest_{\mathsf F}$ are
given as follows.
\begin{lemma}\label{l:1}
  For every $F, G \in \filt\cA$, the following equalities hold:
  \[F \cdot_{\mathsf F} G = (F \cdot G)^{\uparrow}\quad \text{and}\quad F
    \rest_{\mathsf F} G = (F \rest G)^{\uparrow}.\]
\end{lemma}
\begin{proof}
  It is clear that $F \cdot_{\mathsf F} G$ and $F \rest_{\mathsf F} G$ contain $(F
  \cdot G)^{\uparrow}$
  and $(F \rest G)^{\uparrow}$, respectively. Thus it suffices to show that $(F
  \cdot G)^{\uparrow}$ and $(F \rest G)^{\uparrow}$ are filters. Since
  $F$ and $G$ are filters with respect to the semilattice operation
  induced by $\cdot$, it is a standard result that $(F \cdot G)^{\uparrow}$ is
  precisely the filter generated by $F \cup G$. Let us see that $(F
  \rest G)^\uparrow$ is also a filter.
\begin{enumerate}[(i)]
\item As $F$ and $G$ are nonempty, $F \rest G$ is nonempty, and therefore $(F
  \rest G)^\uparrow$ is nonempty too.
\item The set $(F \rest G)^\uparrow$ is upward closed by definition.
\item Suppose $x, y \in (F \rest G)^\uparrow$. So there are $a, b \in
  F$ and $c,d \in G$ with $x \geq a \rest c$ and $y \geq b \rest
  d$. As $F$ and $G$ are filters, we know $a \bmeet b \in F$ and $c
  \bmeet d \in G$. By~\eqref{left}, we find that $(a \bmeet b) \rest
  (c \bmeet d) \leq (a \rest c) \bmeet (b \rest d)$. It follows that
  the element $(a \bmeet b) \rest (c \bmeet d)$ of $F \rest G$ is
  smaller than or equal to $x \bmeet y$. Hence $x \bmeet y \in (F
  \rest G)^\uparrow$. \qedhere
\end{enumerate}
\end{proof}

As observed in the proof of Lemma~\ref{l:1}, the set $F \cdot_{\mathsf F}
G$ is the filter generated by $F \cup G$. Therefore $\cdot_{\mathsf F}$ is a
semilattice operation on~$\filt\cA$ whose induced order~$\lleq$ is
reverse inclusion, that is,
\begin{equation}
  \label{eq:3}
  F \lleq G \iff F \supseteq G.
\end{equation}
In particular, $\filt\cA$ has bottom element~$\cA$, the full filter.

We will now see that $(\filt \cA, \cdot_{\mathsf F}, \rest_{\mathsf F})$ is again a
restriction semilattice, and thus all the results of
Section~\ref{sec:domains} (except Corollary~\ref{c:1}) also hold for
the filter algebra of a restriction semilattice.

\begin{proposition}\label{p:2}
  The operations $\cdot_{\mathsf F}$ and $\rest_{\mathsf F}$ endow $\filt\cA$ with a
  restriction semilattice structure.
\end{proposition}
\begin{proof}
  We already observed that $(\filt\cA, \cdot_{\mathsf F})$ is a semilattice. It
  remains to prove that~\ref{eq:8} and~\ref{lifting} hold. We note
  that, if $\bullet$ is a binary operation on~$\cA$ that is order
  preserving on both coordinates, then for every $S_1, S_2 \subseteq
  \cA$, we have $(S_1^{\uparrow} \mathbin\bullet S_2^{\uparrow})^{\uparrow} =
  (S_1 \mathbin\bullet S_2)^{\uparrow}$. Both $\cdot$ and $\rest$ \emph{are} order
  preserving on both coordinates: for $\cdot$ this is a simple consequence of the definition of $\leq$ in terms of~$\cdot$, and for $\rest$ this
  follows from~\eqref{left}. Thus the noted equality holds for both $\cdot$ and $\rest$, and this observation will be freely used in
  the rest of the proof. 
  
  Let $F, G, H \in \filt\cA$.
  \begin{description}
  \item[\ref{eq:8}:] We need to show that $(F \rest_{\mathsf F} H) \cdot_{\mathsf F}(G
    \rest_{\mathsf F} H) = (F \rest_{\mathsf F} G) \rest_{\mathsf F} H$. The inclusion $\supseteq$
    follows easily from Lemma~\ref{l:1} and~\ref{eq:8} for the
    algebra~$\cA$. Conversely, let $a \in F$, $b\in G$ and $c, c' \in
    H$.  Then,
    \[(a \rest c) \cdot (b \rest c') \just {\eqref{left}}\geq (a \rest
      (c\cdot c')) \cdot (b \rest (c\cdot c')) \just{\ref{eq:8}} = (a
      \rest b) \rest (c\cdot c').\]
    Since $H$ is a filter, we have $c\cdot c' \in H$ and thus, $(a
    \rest c) \cdot (b \rest c') \in ((F \rest G) \rest H)^{\uparrow} =
    (F \rest_{\mathsf F} G) \rest_{\mathsf F} H$.
  \item[\ref{lifting}:] The goal is to show that $(F \bmeet_{\mathsf F} G)
    \rest_{\mathsf F} F = F \bmeet_{\mathsf F} G$. Again, the inclusion~$\supseteq$ is a
    straightforward consequence of Lemma~\ref{l:1} and~\ref{lifting} for~$\cA$. Conversely, given $a,a' \in F$ and
    $b \in G$, we have
    \[(a \cdot b)\rest a' \just{\eqref{left}}\geq ((a \cdot a') \cdot
      b) \rest (a \cdot a') \just{\ref{lifting}} = (a \cdot a') \cdot
      b.\]
    Since $F$ is a filter, we may then conclude that $(a \cdot b)\rest
    a'$ belongs to $(F \cdot G)^{\uparrow} = F \cdot_{\mathsf F} G$, as
    required. \qedhere
  \end{description}
\end{proof}

We will denote by $\ppreceq$ the relation $\preceq_{\filt\cA}$
obtained by applying \Cref{sec:1} to the restriction semilattice $(\filt \cA, \cdot_{\mathsf F}, \rest_{\mathsf F})$. Using Lemma~\ref{l:1},
equivalence~\eqref{eq:3}, and the fact that every filter is upward
closed, we have
\begin{equation}
  F \ppreceq G \iff F \supseteq G \rest F,\label{eq:5}
\end{equation}
for every $F, G \in \filt\cA$. Note that, since $\ppreceq =
\,\preceq_{\filt\cA}$ is defined by a $\{\cdot_{\mathsf F}, \rest_{\mathsf F}\}$-equation,
the relation $\ppreceq$ on $\filt \cA$ is an extension of the relation
$\preceq$ on~$\cA$.
By Lemma~\ref{l:3}\ref{item:5}, the relation $\ppreceq$ is a preorder on
$\filt\cA$ that contains $\supseteq$. We denote by $\approx$ the
equivalence relation induced by $\ppreceq$, and by
\[\rho: \filt\cA \to \filt\cA/{\approx}\]
the canonical projection. The $\approx$-equivalence class of a filter
$F \in \filt \cA$ is denoted~$\dbracket F$. By Lemma~\ref{l:2},
the poset $\filt\cA /{\approx}$, with order inherited from $\ppreceq$, also admits a meet-semilattice structure, with
the meet of two filters $F, G$ given by
\begin{equation}
  \label{eq:1}
  \dbracket F \wedge \dbracket G = \dbracket{F \rest_{\mathsf F}
    G}.
\end{equation}
We finish this section by showing that $\ppreceq$ admits a description
in terms of the projection~$\pi: \cA \twoheadrightarrow \cA/{\sim}$.
\begin{proposition}\label{p:3}
  For every filter $F \subseteq \cA$, the subset $\pi[F]^{\uparrow}
  \subseteq \cA/{\sim}$ is a filter of $\cA/{\sim}$, and conversely,
  every filter of $\cA/{\sim}$ is of the form $\pi[F]^{\uparrow}$ for
  some filter $F \subseteq \cA$. Moreover, for every $F, G \in
  \filt\cA$, we have
  \begin{equation}
    \label{eq:15}
    F \ppreceq G \iff \pi[G]^\uparrow \subseteq \pi[F]^\uparrow.
  \end{equation}
  In particular, the surjections
  \[\rho: \filt\cA \twoheadrightarrow \filt\cA/{\approx}\qquad \text{
      and }\qquad \pi[\_]^\uparrow: \filt\cA \twoheadrightarrow
    \filt{\cA/{\sim}}\] are isomorphic (in the sense that there is a bijection $f$ such that $f \circ \rho = \pi[\_]^\uparrow$).
\end{proposition}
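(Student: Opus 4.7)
The approach is to establish three facts from which the proposition will follow automatically: (i) $\pi[F]^{\uparrow}$ is always a filter of $\cA/{\sim}$; (ii) the equivalence $F \ppreceq G \iff \pi[G]^{\uparrow} \subseteq \pi[F]^{\uparrow}$; and (iii) every filter of $\cA/{\sim}$ arises as $\pi[F]^{\uparrow}$ for some $F \in \filt \cA$. Given these, (ii) identifies the fibres of the map $F \mapsto \pi[F]^{\uparrow}$ as exactly the $\approx$-equivalence classes, and (iii) makes the map surjective, so the induced map $\filt \cA / {\approx} \to \filt{\cA/{\sim}}$ is a bijection witnessing the isomorphism of quotients.

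For (i), only meet-closure requires real work: given $[x], [y] \in \pi[F]^{\uparrow}$, I pick witnesses $a, b \in F$ with $[a] \leq [x]$ and $[b] \leq [y]$; then $a \bmeet b \in F$, and combining \ref{lifting} (in the form $a \bmeet b = (a \bmeet b) \rest b$) with \eqref{left} yields $a \bmeet b \leq a \rest b$, so upward closure of $F$ also gives $a \rest b \in F$. Its class $[a \rest b] = [a] \wedge [b] \leq [x] \wedge [y]$ exhibits $[x] \wedge [y]$ as being in $\pi[F]^{\uparrow}$. For (ii), the forward direction is immediate: if $G \rest F \subseteq F$ then $a \in G$ and any $b \in F$ yield $a \rest b \in F$, giving $[a] \geq [a \rest b] \in \pi[F]$. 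For the converse, I take $a \in G$ and $b \in F$; the hypothesis $[a] \in \pi[G]^{\uparrow} \subseteq \pi[F]^{\uparrow}$ produces $e \in F$ with $e \preceq a$ (that is, $a \rest e = e$), and then I compute $a \rest (e \bmeet b) \stackrel{\eqref{eq:14}}{=} (a \rest e) \bmeet b = e \bmeet b$; since $e \bmeet b \leq b$, \eqref{left} promotes this to $e \bmeet b \leq a \rest b$, and upward closure of $F$ delivers $a \rest b \in F$.

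The hard part is (iii). The natural candidate $F := \pi^{-1}(\mathcal F)$ is generally not $\bmeet$-closed, because two distinct elements in a common $\sim$-class can meet to an element in a strictly smaller class. My remedy is to localise. Fix any $a \in \pi^{-1}(\mathcal F)$ (nonempty since $\mathcal F$ is), set $F_a := \{b \in a^{\downarrow} : [b] \in \mathcal F\}$, and let $F := F_a^{\uparrow}$, the upward closure of $F_a$ in $\cA$. The engine is the identity $b \bmeet c = b \rest c$ for all $b, c \leq a$. To prove it, I first note that $b \leq a$ forces $b \rest a = b$: indeed $b = a \bmeet b$, so $b \rest a = (a \bmeet b) \rest a = a \bmeet b = b$ by \ref{lifting}; then for $b, c \leq a$ the calculation $b \rest c = b \rest (a \bmeet c) \stackrel{\eqref{eq:14}}{=} (b \rest a) \bmeet c = b \bmeet c$ concludes. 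Because $\bmeet$ and $\rest$ coincide on $a^{\downarrow}$, $F_a$ is closed under $\bmeet$ (by $[b \bmeet c] = [b] \wedge [c] \in \mathcal F$), and a routine upward-closure argument makes $F$ a filter of $\cA$.

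To verify $\pi[F]^{\uparrow} = \mathcal F$, the inclusion $\subseteq$ is clear since $F_a \subseteq \pi^{-1}(\mathcal F)$ and $\mathcal F$ is upward closed. For $\supseteq$, given $[x] \in \mathcal F$ the element $x \rest a$ lies in $a^{\downarrow}$ by \eqref{restricts}, and $[x \rest a] = [x] \wedge [a] \in \mathcal F$ by meet-closure of $\mathcal F$, so $x \rest a \in F_a \subseteq F$; since $[x \rest a] \leq [x]$, this places $[x]$ in $\pi[F]^{\uparrow}$. The main obstacle throughout is (iii), specifically recognising that localising to a principal downset $a^{\downarrow}$, where $\bmeet$ and $\rest$ coincide, repairs the $\bmeet$-closure that $\pi^{-1}(\mathcal F)$ generally lacks.
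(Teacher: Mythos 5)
Your proof is correct, and while your treatment of the filter property of $\pi[F]^\uparrow$ and of the forward direction of \eqref{eq:15} matches the paper's, you diverge from it in two places, most substantially in the surjectivity argument. For the converse of \eqref{eq:15}, the paper passes through \Cref{l:5} (coincidence of $\leq$ and $\preceq$ on $b^\downarrow$) to upgrade $c \rest b \preceq a \rest b$ to an inequality in $\leq$; your computation $a \rest (e \bmeet b) = (a \rest e) \bmeet b = e \bmeet b$ via \eqref{eq:14} followed by \eqref{left} reaches $a \rest b \in F$ more directly and is a clean simplification. For surjectivity, the paper fixes a filter $G$ of $\cA/{\sim}$, defines $a \equiv b \iff \exists d \in \pi^{-1}(G)\colon d \rest a = d \rest b$ on $\pi^{-1}(G)$, and shows that any $\equiv$-class is a filter mapping onto $G$; you instead localise to a principal downset, taking $F_a = a^\downarrow \cap \pi^{-1}(\mathcal F)$ for a fixed $a \in \pi^{-1}(\mathcal F)$ and then its upward closure, with the identity $b \bmeet c = b \rest c$ for $b, c \leq a$ (a consequence of \ref{lifting} and \eqref{eq:14}, and morally the content of \Cref{l:5}) doing the work of restoring $\bmeet$-closure. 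Your construction is more explicit and reuses the downset machinery already developed, at the cost of an arbitrary choice of base point $a$; the paper's $\equiv$-class construction is choice-of-representative-free in a different sense (any class works) and isolates the compatibility relation hiding inside $\pi^{-1}(G)$, but is otherwise no stronger. Your concluding step---that \eqref{eq:15} identifies the fibres of $\pi[\_]^\uparrow$ with the $\approx$-classes and surjectivity then gives the isomorphism of quotients---is exactly what the paper leaves implicit, so nothing is missing.
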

\begin{proof}
  First we show that $\pi[F]^\uparrow = (F / {\sim})^\uparrow$ is a
  filter.
  \begin{enumerate}[(i)]
  \item As $F$ is nonempty, $F / {\sim}$, and therefore $(F /
    {\sim})^\uparrow$, is nonempty.
\item The set $(F / {\sim})^\uparrow$ is upward closed by definition.

\item Suppose $[a], [b] \in (F /{\sim})^\uparrow$, say $[a_0] \preceq
  [a]$ and $[b_0] \preceq [b]$ for $a_0, b_0 \in F$. Then as~$F$ is a
  filter, we have $a_0 \bmeet b_0 \in F$. Using~\eqref{eq:4} and
  \Cref{l:2}, we have
  \[[a_0 \bmeet b_0] \preceq [a_0 \rest b_0] = [a_0] \wedge [b_0],\]
  and thus $[a_0] \wedge [b_0]$ belongs to $(F/{\sim})^{\uparrow}$,
  yielding that so does $[a] \wedge [b]$, as desired.
\end{enumerate}

Conversely, let $G \subseteq \cA/{\sim}$ be a filter. Define the
relation $\equiv$ on $\pi^{-1}(G)$ by $a \equiv b \iff \exists d \in
\pi^{-1}(G) : d \rest a = d \rest b$. Then $\equiv$ is clearly
reflexive and symmetric. It is also transitive, because given $d_1,
d_2 \in \pi^{-1}(G)$ such that $d_1 \rest a = d_1 \rest b$ and $d_2
\rest b = d_2 \rest c$, we have $d_1 \rest d_2 \in \pi^{-1}(G)$ and
$d_1 \rest d_2 \rest a = d_1 \rest d_2 \rest c$ (proven using the law
$d_1 \rest d_2 \rest x = d_2 \rest d_1 \rest x$, which is an evident
consequence of \ref{eq:8}). Take any equivalence class $E$ of
$(\pi^{-1}(G), \equiv)$---as $\pi^{-1}(G)$ is nonempty, there exists
at least one choice. We claim that $E$ is a filter. It is nonempty by
definition. Let us see that it is upward closed. Given $a \in E$ and
$b \in \cA$ such that $a \leq b$, since $\pi^{-1}(G)$ is upward closed
(as $\pi$ is order preserving), we have that $b \in \pi^{-1}(G)$. To
conclude that $b$ belongs to $E$, we only need to show that $a$ and
$b$ are $\equiv$-equivalent. That is indeed the case because,
successively using~\ref{lifting} and the inequality $a \leq b$ (as
well as idempotency and commutativity of $\cdot$), we may compute
\[a \rest a = a = a \cdot b = (a\cdot b) \rest b = a \rest b.\]
Finally, if $a, b \in E$, then taking $d \in \pi^{-1}(G)$ such that $d
\rest a = d \rest b$, we know $\pi(d \rest a) \in G$, so $d \rest a
\in \pi^{-1}(G)$. Then as $d \rest (d \rest a) = d \rest a$ and $a \in
E$, we have $d \rest a \in E$. Then since $d \rest a = d \rest b \leq
a, b$, the upward-closed set $E$ is closed under meets. Hence $E$ is a
filter. Then, given $[d] \in G$, if we choose some $a \in E$ we have
$d \rest a \in E$ and $\pi(d \rest a) \preceq [d]$, so $[d] \in
\pi[E]^\uparrow$. Hence $\pi[E]^\uparrow = G$.

Finally, we prove~\eqref{eq:15}. First assume that $F \ppreceq G$. Take an arbitrary $a \in G$. As $F$ is nonempty, we can choose some $b \in F$. As $F \ppreceq G$, we have $a \rest b \in G \rest F \subseteq F$. So $[a \rest b] \in \pi[F]$. Since $[a \rest b] \preceq [a]$, we obtain $[a] \in \pi[F]^\uparrow$. As $a \in G$ was arbitrary, we deduce $\pi[G] \subseteq \pi[F]^\uparrow$. Thus $\pi[G]^\uparrow \subseteq \pi[F]^\uparrow$.

Conversely, suppose we have $\pi[G]^\uparrow \subseteq
\pi[F]^\uparrow$, and pick any $a \in G$ and $b\in F$, so that $a
\rest b \in G \rest F$. Since $[a]$ and $[b]$ both belong to
$\pi[F]^\uparrow$, so does $[a] \wedge [b] = [a \rest b]$. Hence there
exists $c \in F$ such that $c \preceq a \rest b$.  In turn, by
reflexivity of ${\ppreceq}$, the element $c \rest b$ also belongs to
$F$, and furthermore we have $c \rest b \preceq c\preceq a \rest
b$. Thus $c \rest b \preceq a \rest b$ in $b^\downarrow$, and by
\Cref{l:5} this implies $c \rest b \leq a \rest b$, which yields $a
\rest b \in F$ as required.
\end{proof}

\section{Representability}\label{sec:rep}

In this section, we finally show that equations \ref{schein1} --
\ref{lifting} axiomatise the representable $\{-, \rest\}$-algebras, by
describing a generic representation, by partial functions, of any such
algebra.

Recall that a filter $F \subseteq \cS$ of a meet semilattice $\cS$ is
\defn{maximal} if it is proper (not the whole of $\cS$) and is maximal
amongst all proper filters of $\cS$, with respect to inclusion. We use
$\mf \cS$ to denote the set of all maximal filters of~$\cS$.

\begin{lemma}\label{filter_up}
  Let $\cA$
  be a restriction semilattice. Then for every maximal filter $\mu
  \subseteq \cA$ and for every filter $F \subseteq \cA$, the
  following are equivalent:
  \begin{enumerate}[label = (\alph*)]
  \item $\mu \rest_{\mathsf F} F$ is maximal,
  \item\label{item:3} $\mu \ppreceq F$,
  \item\label{item:4} $\mu \approx \mu \rest_{\mathsf F} F$.
  \end{enumerate}
\end{lemma}
\begin{proof}
  To show that $\mu \rest_{\mathsf F} F$ is maximal precisely when $\mu \ppreceq
  F$, we first assume $\mu \rest_{\mathsf F} F$ (that is, $(\mu \rest F)^\uparrow$) is
  maximal. We want to show that $(F \rest \mu)^\uparrow = \mu$. Because,
  by~\eqref{restricts}, $\mu \subseteq (F \rest \mu)^\uparrow$, and
  because $\mu$ is maximal, it suffices to show that the filter $(F \rest
  \mu)^\uparrow$ is proper. Suppose not: then there is $a \in F$ and
  $b \in \mu$ such that $a \rest b = 0$. By \Cref{l:2}, we obtain $0
  \sim a \rest b \sim b \rest a$. So by \Cref{l:3}\ref{item:7}, we
  have $0 = b \rest a \in \mu \rest F$, contradicting properness of
  $(\mu \rest F)^\uparrow$.
  Conversely, we have $\mu \ppreceq F$ if and only if $F \rest \mu
  \subseteq \mu$, which implies $\mu \rest F \rest \mu \subseteq \mu
  \rest \mu$. Since $\mu$ is closed under $\cdot$, by~\ref{lifting}
  and~\eqref{left}, we have $\mu \rest \mu \subseteq \mu$. So if $0$
  belonged to $\mu \rest F$, it would also belong to $\mu$,
  contradicting the properness of $\mu$. (The law $0 \rest a = 0$
  follows also from \ref{lifting}.) Thus $(\mu \rest F)^\uparrow$ is
  proper. 
  Now, to show $(\mu \rest F)^\uparrow$ is maximal, suppose $G$ is a
  filter with $(\mu \rest F)^\uparrow \subseteq G \subsetneq \cA$. If
  $G \rest_{\mathsf F} \mu = \cA$ then there are $a \in G$ and $b \in
  \mu$ with $a \rest b = 0$. In that case, making use of an element $c
  \in F$, we have
  \[0 = a \rest b \rest c \geq ((b \rest c) \cdot a) \rest (b \rest c)
    = (b \rest c) \cdot a \in G,\]
  where the inequality follows from \eqref{left} and the last equality
  from \ref{lifting}. This contradicts $G \subsetneq \cA$; hence $G
  \rest_{\mathsf F} \mu \neq \cA$. Since $G \rest_{\mathsf F} \mu
  \supseteq \mu$ and $\mu$ is maximal, we therefore have $G
  \rest_{\mathsf F} \mu = \mu$. Now given an arbitrary $a \in G$, take
  $b \in \mu$ and $c \in F$. Then since $F \subseteq \mu
  \rest_{\mathsf F} F \subseteq G$, we have $a \cdot c \in G$, so $(a
  \cdot c) \rest b \in G \rest_{\mathsf F} \mu = \mu$, and thus $(a
  \cdot c) \rest b \rest c \in \mu \rest F$. But $(a \cdot c) \rest b
  \rest c = b \rest (a \cdot c) \rest c = b \rest (a \cdot c) \leq a$,
  and hence $a \in (\mu \rest F)^\uparrow$. Hence $G = (\mu \rest
  F)^\uparrow$.

  Finally, \ref{item:3} and~\ref{item:4} are equivalent because,
  by~\eqref{eq:1}, we have \[\mu \ppreceq F \iff \dbracket \mu =  \dbracket \mu \wedge \dbracket F  \iff \dbracket \mu =  \dbracket{\mu \rest_{\mathsf F}
    F} \iff \mu 
  \approx \mu \rest_{\mathsf F} F.\qedhere\]
\end{proof}

\begin{proposition}\label{p:4}
  Let $\cA$ be a restriction semilattice, and let $\theta \from \cA \to \mathcal{PF}(\mf \cA)$ be the map
  given by
  \begin{equation*}
    a^\theta \coloneqq \{(\xi, \mu)\in \mf \cA \times \mf \cA \mid \xi
    \approx \mu \ \text{ and } \ a \in \mu \}.
  \end{equation*}
  Then $\theta$ is a homomorphism of restriction semilattices.
\end{proposition}
\begin{proof}
  Since (maximal) filters containing $a$ belong to the downset $\{F
  \in \filt\cA \mid F \lleq a^{\uparrow}\} = \{F \in \filt\cA \mid F
  \supseteq a^{\uparrow}\}$ (recall~\eqref{eq:3}), the fact that each
  $a^\theta$ is a partial function on $\mf \cA$ is an immediate
  consequence of Lemma~\ref{l:5} applied to the
  restriction semilattice $\filt\cA$.
  
  For showing that $\theta$ represents both operations correctly, we
  pick two $\approx$-equivalent maximal filters $\xi, \mu \in \mf
  \cA$.
  
  For $\cdot$ we have the following:
  \begin{align*}
    (\xi, \mu) \in a^\theta \cap b^\theta
    & \iff a \in \mu \ \text{ and }\ b \in \mu
    \\  & \iff a\cdot b \in \mu \qquad\qquad\qquad \text{(because $\mu$ is
          a filter)}
    \\  & \iff (\xi, \mu) \in (a\cdot b)^\theta.
  \end{align*}

   For $\rest$ suppose $(\xi, \mu) \in (a \rest
  b)^\theta$. Then $a \rest b \in \mu$, so $b \in \mu$, by
  \eqref{restricts}. Hence $(\xi, \mu) \in b^\theta$. To show that
  $(\xi, \mu) \in a^\theta \rest b^\theta$, it remains to show that
  $\xi$ is in the domain of the partial function $a^\theta$. That is,
  we must find a maximal filter $\nu$ with $a \in \nu$ and $\nu
  \approx \xi$. We claim that $(\mu \rest \{a\})^\uparrow$ (equal to
  $\mu \rest_{\mathsf F}a^{\uparrow}$) is the required $\nu$. We noted that $b
  \in \mu$; hence $b \rest a \in \mu \rest \{a\}$, and hence $(\mu
  \rest \{a\})^\uparrow$ contains $a$, by \eqref{restricts}. By
  \Cref{filter_up}, we have that $(\mu \rest \{a\})^\uparrow$ is a
  maximal filter $\approx$-equivalent to $\mu$, hence to $\xi$,
  provided $\mu \ppreceq a^\uparrow$. That is, provided $a^\uparrow \rest \mu
  \subseteq \mu$ (recall~\eqref{eq:5}). Since, $\mu$ is upward closed
  and, by~\eqref{left}, $\rest$ is order-preserving in the first
  coordinate, it suffices to show that $a \rest c \in \mu$ for every
  $c \in \mu$. Fix $c \in \mu$. Since $a \rest b \in \mu$ and $\mu$ is
  closed under $\cdot$, we have $(a \rest b) \cdot c \in \mu$. Since
  \[(a \rest b) \cdot c \just {\eqref{eq:14}} =a \rest (b \cdot c)
    \just{\eqref{left}} \leq a \rest c,\]
  it follows that $a \rest c \in \mu$ as required.

  Conversely, suppose $(\xi, \mu) \in a^\theta \rest b^\theta$. To show that $(\xi, \mu) \in (a \rest b)^\theta$, we must show that $a \rest b \in \mu$. Now $(\xi, \mu) \in a^\theta \rest b^\theta$ means $b \in \mu$ and there exists a maximal filter~$\nu$ such that
  $\nu \approx \xi$ and $a \in \nu$. In particular, since $\nu \approx
  \xi \approx \mu$ and $a \in \nu$, by \eqref{eq:15}, we have
  $[a] \in (\mu/{\sim})^\uparrow$. On the other hand, since $\mu$ is
  maximal, if it does not contain $a \rest b$, then it contains some
  $c$ satisfying $(a \rest b) \cdot c = 0$, and since $b, c \in \mu$,
  we have $[b \cdot c] \in \mu/{\sim}$. Thus, the filter
  $(\mu/{\sim})^\uparrow$ contains the element $[a] \wedge [b \cdot
  c]$. But using~\eqref{eq:2} and~\eqref{eq:14} in this order, we may
  compute
  \[[a] \wedge [b \cdot c]= [a \rest (b \cdot c)] = [(a \rest b) \cdot
    c] = [0].\]
  By Lemma~\ref{l:3}\ref{item:7}, this contradicts properness
  of~$\mu$.
\end{proof}

Notice that, as shown by the next example, the map $\theta$ of
Proposition~\ref{p:4} is not, in general, a representation of~$\cA$,
as it may fail to be injective.

\begin{example}
  Let $X = \{x, y, z\}$, and for a subset $S \subseteq X $ denote by
  ${\rm Id}_S$ the identity partial function on~$X$ with domain
  $S$. We let $\cA$ be the $\{\cdot, \rest\}$-algebra of partial
  functions with universe $\{{\rm Id}_\emptyset, {\rm Id}_{\{x\}},
  {\rm Id}_{\{x, y\}}, {\rm Id}_{\{x,z\}}\}$ (note that the operations
  $\cdot$ and $\rest$ coincide on~$\cA$). Then the unique maximal
  filter of~$\cA$ is $\{{\rm Id}_{\{x\}}, {\rm Id}_{\{x, y\}}, {\rm
    Id}_{\{x,z\}}\}$, and thus there is no maximal filter separating
  the elements ${\rm Id}_{\{x, y\}}$ and ${\rm Id}_{\{x,z\}}$. In
  particular, the map from Proposition~\ref{p:4} is not a
  representation of~$\cA$ by partial functions.
\end{example}

The rest of this section is devoted to showing that, if we replace
`restriction semilattice' by `difference--restriction algebra' in
the statement of Proposition~\ref{p:4}, the map $\theta$ becomes a
representation of~$\cA$ by partial functions. For that, we will use
some properties of maximal filters of subtraction algebras, and hence,
of difference--restriction algebras.

We now let $\cS$ be a subtraction algebra. We noted in
\Cref{preliminaries} that for every $a \in \cS$ we have a Boolean
algebra $a^\downarrow$. This will allow us to identify maximal filters of
$\cS$ with those of $a^\downarrow$. We recall that maximal filters of
Boolean algebras are also known as \emph{ultrafilters}, and they are
characterised as those filters $F$ such that for every element $b$ of
the Boolean algebra concerned, $b \in F \iff \overline{b} \notin F$,
where $\overline{b}$ denotes the complement of~$b$.

\begin{proposition}\label{l:4}
  For every $a \in \cS$, there is a bijection between
  ultrafilters of the Boolean algebra $a^{\downarrow}$ and maximal
  filters of $\cS$ containing $a$.

 More precisely: if $\mu \subseteq \cS$ is a maximal filter containing $a$, then
  $\mu \cap a^\downarrow$ is an ultrafilter of $a^\downarrow$, conversely if
  $\nu$ is an ultrafilter of $a^\downarrow$, then $\nu^\uparrow$
   (with the upward closure taken in $\cS$) is a
  maximal filter of $\cS$, and these constructions are mutually inverse.
\end{proposition}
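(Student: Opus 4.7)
The plan is to verify the three claims separately and then confirm they assemble into a bijection. The whole argument rests on having the Boolean algebra $a^\downarrow$ available, with complement $\overline{b}^{\,a} = a - b$, and on the standard fact that a filter of a Boolean algebra is an ultrafilter iff for every element exactly one of the element or its complement belongs.

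First I would check that, when $\mu$ is a maximal filter of $\cS$ containing $a$, the intersection $\mu \cap a^\downarrow$ is a filter of $a^\downarrow$: it contains $a$, is upward closed in $a^\downarrow$, and is closed under $\bmeet$. The interesting part is the ultrafilter property. Given $b \in a^\downarrow$ with $b \notin \mu$, maximality of $\mu$ implies the filter generated by $\mu \cup \{b\}$ is improper, so there exists $d \in \mu$ with $d \bmeet b = 0$. Then $a \bmeet d \in \mu \cap a^\downarrow$ and $(a \bmeet d) \bmeet b = 0$; interpreted inside $a^\downarrow$, this says $a \bmeet d \leq \overline{b}^{\,a} = a - b$, so by upward closure $a - b \in \mu \cap a^\downarrow$. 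Hence exactly one of $b, a-b$ lies in $\mu \cap a^\downarrow$, so it is an ultrafilter.

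Next I would verify the converse direction. If $\nu \subseteq a^\downarrow$ is an ultrafilter then $\nu^\uparrow$ (upward closure in $\cS$) is a filter of $\cS$: nonemptiness is trivial; for closure under $\bmeet$, pick $b, c \in \nu$ below two given elements and use that $b \bmeet c \in \nu$; and $\nu^\uparrow$ is proper because $0 \notin \nu$. For maximality, suppose a proper filter $F$ extends $\nu^\uparrow$ and take $x \in F$. Since $a \in \nu \subseteq F$, we have $a \bmeet x \in F \cap a^\downarrow$. The ultrafilter dichotomy applied to $a \bmeet x$ in $a^\downarrow$ forces $a \bmeet x \in \nu$ (otherwise its Boolean complement $a - (a \bmeet x)$ would lie in $\nu \subseteq F$, producing $(a \bmeet x) \bmeet (a - (a \bmeet x)) = 0 \in F$, contradicting properness). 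Thus $x \in \nu^\uparrow$, yielding $F = \nu^\uparrow$.

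Finally I would check that the two constructions are mutually inverse. The inclusion $(\mu \cap a^\downarrow)^\uparrow \subseteq \mu$ is immediate from upward closure of $\mu$, and for the reverse inclusion any $x \in \mu$ satisfies $a \bmeet x \in \mu \cap a^\downarrow$ with $a \bmeet x \leq x$. For the other composition, $\nu \subseteq \nu^\uparrow \cap a^\downarrow$ is trivial, and any $b \in \nu^\uparrow \cap a^\downarrow$ lies above some $c \in \nu$ with $b \in a^\downarrow$, so $b \in \nu$ by upward closure within $a^\downarrow$. The only delicate step is the ultrafilter verification in the first paragraph; everything else is bookkeeping using the Boolean structure on $a^\downarrow$ guaranteed by the discussion preceding \Cref{p:1}.
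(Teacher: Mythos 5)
Your proposal is correct and takes essentially the same route as the paper: the same three-part decomposition (maximal filter of $\cS$ restricts to an ultrafilter of $a^\downarrow$, an ultrafilter lifts via $\nu^\uparrow$ to a maximal filter, and the two assignments are mutually inverse), all resting on the Boolean structure of $a^\downarrow$ with complement $a-b$ and the dichotomy characterisation of ultrafilters. The only minor variation is in the first part, where you obtain the dichotomy by extracting a witness $d \in \mu$ with $d \bmeet b = 0$ from the improperness of the filter generated by $\mu \cup \{b\}$, whereas the paper lifts any proper extension $F \subseteq a^\downarrow$ of $\mu \cap a^\downarrow$ to the filter $F^\uparrow$ of $\cS$ and invokes maximality of $\mu$ there; both arguments are standard and equally sound.
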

\begin{proof}
  Suppose $\mu \subseteq \cS$ is a maximal filter, and let $a \in
  \mu$. It is easy to verify that $\mu \cap a^\downarrow$ is a proper
  filter of $a^\downarrow$.  It is also easy to verify that any filter
  $F$ of $a^\downarrow$ yields a filter $F^\uparrow$ of $\cS$ and that $(\mu \cap
  a^\downarrow)^\uparrow \subseteq \mu$. Hence
  any filter $F$ of $a^\downarrow$ properly extending $\mu \cap
  a^\downarrow$ satisfies $\mu \subsetneq F^\uparrow = \cS$, and hence $F =
  a^\downarrow$ (since $F$ is upward closed in $a^\downarrow$). That is, $\mu \cap
  a^\downarrow$ is an ultrafilter of~$a^\downarrow$.
  
  Conversely, let $\nu \subseteq a^{\downarrow}$ be an ultrafilter. It
  is straightforward to check that $\nu^\uparrow$ is a proper filter
  of~$\cS$, so we only need to show it is maximal. Let $F \subseteq
  \cS$ be a filter properly containing $\nu^\uparrow$ and let $b$
  belong to $F$ but not $\nu^\uparrow$.  We know both $a \bmeet b$ and $a - b$
  are in $a^\downarrow$ and are complements in this Boolean
  algebra. Hence either $a \bmeet b$ or $a - b$ is in the ultrafilter
  $\nu$, and hence in $\nu^\uparrow$. But $\nu^\uparrow$ is an upward-closed set
  that does not contain $b$, so it cannot contain $a\bmeet b$, and
  thus we have $a-b \in \nu^\uparrow \subseteq
  F$. Using~\eqref{disjoint_0}, this yields $b \bmeet (a-b) = 0 \in
  F$, so $F = \algebra A$. Hence $\nu^\uparrow$ is a maximal filter.

  Finally, we check that the two constructions are inverse to each
  other. If $\mu \subseteq \cS$ is a maximal filter and $a \in \mu$,
  then we have an inclusion of maximal filters $(\mu \cap
  a^\downarrow)^\uparrow \subseteq \mu$ and thus an equality. On the
  other hand, if $\nu \subseteq a^\downarrow$ is an ultrafilter, it is
  clear that $\nu = \nu^\uparrow \cap a^\downarrow$---this is
  the case for \emph{any} upward-closed subset $\nu$ of $a^\downarrow$.
\end{proof}

\begin{corollary}\label{max_condition}
  Let $F \subseteq \cS$ be a filter. Then the following are
  equivalent.
  \begin{enumerate}[label = (\alph*)]
  \item\label{equivalent_one} $F$ is maximal.
  \item\label{equivalent_two} For all $a \in F$ and $b \in \algebra
    S$, precisely one of $a \bmeet b$ and $a - b$ belongs to $F$.
  \item\label{equivalent_three} For some $a \in F$, for all $b \in
    \algebra S$, precisely one of $a \bmeet b$ and $a - b$ belongs to
    $F$.
  \end{enumerate}
\end{corollary}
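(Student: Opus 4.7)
My plan is to prove the equivalence by establishing the chain $\ref{equivalent_one} \Rightarrow \ref{equivalent_two} \Rightarrow \ref{equivalent_three} \Rightarrow \ref{equivalent_one}$, leveraging \Cref{l:4}, which identifies maximal filters containing~$a$ with ultrafilters of the Boolean algebra $a^\downarrow$. The key observation throughout is that in $a^\downarrow$, the elements $a \bmeet b$ and $a - b$ are Boolean complements: indeed $a - b = a - (a \bmeet b)$ by~\eqref{eq:9}, so $a - b$ is the $a^\downarrow$-complement of $a \bmeet b$.

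For \ref{equivalent_one}$\Rightarrow$\ref{equivalent_two}, I would fix an arbitrary $a \in F$ and apply \Cref{l:4} to conclude that $F \cap a^\downarrow$ is an ultrafilter of the Boolean algebra $a^\downarrow$. For any $b \in \cS$, both $a \bmeet b$ and $a - b$ lie in $a^\downarrow$ and are Boolean complements there, so exactly one of them lies in the ultrafilter $F \cap a^\downarrow$, hence in~$F$. The implication \ref{equivalent_two}$\Rightarrow$\ref{equivalent_three} is immediate since $F$ is nonempty.

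The main remaining step is \ref{equivalent_three}$\Rightarrow$\ref{equivalent_one}; this is also the part that takes the most care. Fix a witness $a \in F$ for~\ref{equivalent_three}. First I would verify that $F$ is proper by taking $b = a$: since $a \bmeet a = a$ and $a - a = 0$, the hypothesis forces $0 \notin F$. Next, $F \cap a^\downarrow$ is a filter of $a^\downarrow$ containing $a$. To see it is an ultrafilter, I would note that every $b \in a^\downarrow$ satisfies $b = a \bmeet b$, and the Boolean complement in $a^\downarrow$ is $a - b$; the hypothesis \ref{equivalent_three} therefore says that precisely one of $b$ and its complement belongs to $F \cap a^\downarrow$. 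By \Cref{l:4}, $(F \cap a^\downarrow)^\uparrow$ is then a maximal filter of $\cS$. To finish I would show $F = (F \cap a^\downarrow)^\uparrow$: the inclusion $\supseteq$ is immediate from upward closure of~$F$, and for $\subseteq$, given $c \in F$, the element $a \bmeet c$ lies in $F \cap a^\downarrow$ and satisfies $a \bmeet c \leq c$, so $c \in (F \cap a^\downarrow)^\uparrow$.

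No step should require genuinely novel calculation; the only subtlety is being careful to deduce properness of $F$ in the final implication before invoking \Cref{l:4}, and to identify $F$ itself (rather than some extension) as the maximal filter produced.
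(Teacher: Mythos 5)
Your proposal is correct and follows essentially the same route as the paper: both directions hinge on \Cref{l:4}, identifying $F \cap a^\downarrow$ as an ultrafilter of the Boolean algebra $a^\downarrow$ in which $a \bmeet b$ and $a-b$ are complements. The only cosmetic difference is at the end of \ref{equivalent_three}$\Rightarrow$\ref{equivalent_one}: you show $F \subseteq (F \cap a^\downarrow)^\uparrow$ directly via $a \bmeet c \leq c$, whereas the paper deduces the equality from maximality of $(F \cap a^\downarrow)^\uparrow$ together with properness of $F$; both are fine.
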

\begin{proof}
  Since $F$ is nonempty, it is clear that \ref{equivalent_two} implies
  \ref{equivalent_three}, while \ref{equivalent_one} implying
  \ref{equivalent_two} is an immediate consequence
  of~\Cref{l:4}. Suppose $a \in F$ witnesses the truth of
  \ref{equivalent_three}. Then by this hypothesis, $\nu \coloneqq F
  \cap a^\downarrow$ is an ultrafilter of $a^\downarrow$. Therefore,
  by \Cref{l:4}, $\nu^\uparrow$ is a maximal filter of $\cS$, and
  clearly $\nu^\uparrow \subseteq F$. Since $F$ is proper (because, by
  hypothesis and that $a - 0 \in F$, we know $0 = a \cdot 0$ does not belong to $F$), we conclude
  that $F = \nu^\uparrow$, and hence $F$ is maximal.
\end{proof}

\begin{corollary}\label{extension}
  Let $F \subseteq \cS$ be a proper filter. Then there exists a
  maximal filter $\mu$ with $F \subseteq \mu$.
\end{corollary}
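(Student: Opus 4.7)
The plan is to reduce the statement to the Boolean prime ideal theorem by way of \Cref{l:4}, which already links maximal filters of $\cS$ to ultrafilters of a principal Boolean algebra $a^\downarrow$. First I would pick any element $a \in F$, which exists because $F$ is nonempty; since $F$ is proper it cannot contain $0$ (else upward closure forces $F = \cS$), so $a \neq 0$ and hence $a^\downarrow$ is a nontrivial Boolean algebra. I would then consider the set $F \cap a^\downarrow$.

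Next I would verify that $F \cap a^\downarrow$ is a proper filter of $a^\downarrow$: nonemptiness follows since $a \in F \cap a^\downarrow$; upward closure within $a^\downarrow$ is inherited from $F$; closure under the meet $\bmeet$ is inherited too; and properness holds because $0 \notin F$. By the Boolean prime ideal theorem (equivalently, by a direct Zorn's lemma argument applied inside the Boolean algebra $a^\downarrow$), this filter extends to an ultrafilter $\nu$ of $a^\downarrow$. By \Cref{l:4}, the upset $\nu^\uparrow$ taken in $\cS$ is a maximal filter of $\cS$.

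It remains to see $F \subseteq \nu^\uparrow$. For any $b \in F$, since $F$ is a filter we have $a \bmeet b \in F$, and of course $a \bmeet b \in a^\downarrow$, so $a \bmeet b \in F \cap a^\downarrow \subseteq \nu$; as $a \bmeet b \leq b$, this gives $b \in \nu^\uparrow$. Setting $\mu \coloneqq \nu^\uparrow$ completes the proof.

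I do not expect any serious obstacle: the construction is the standard Boolean prime ideal argument, and all the work of translating between maximal filters of $\cS$ and ultrafilters of $a^\downarrow$ has already been packaged into \Cref{l:4}. If one wished to avoid invoking \Cref{l:4} entirely, an alternative is a direct Zorn's lemma argument on the poset of proper filters of $\cS$ containing $F$: the only point to check is that the union of a chain of proper filters is again a proper filter, which is immediate because none of them contain $0$.
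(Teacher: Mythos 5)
Your proposal is correct and follows essentially the same route as the paper: pick $a \in F$, extend the filter $F \cap a^\downarrow$ of the Boolean algebra $a^\downarrow$ to an ultrafilter $\nu$, and apply \Cref{l:4} to conclude that $\nu^\uparrow$ is a maximal filter containing $F$. Your explicit checks of properness (via $0 \notin F$) and of the containment $F \subseteq \nu^\uparrow$ are details the paper leaves implicit, and they are carried out correctly.
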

\begin{proof}
  Take an element $a \in F$. It is straightforward to check that $F
  \cap a^\downarrow$ is a filter of the Boolean algebra
  $a^\downarrow$. Let $\nu$ be an ultrafilter of $a^\downarrow$ that
  extends $F \cap a^\downarrow$. Then, by \Cref{l:4}, the set $\nu^\uparrow$
  is the required $\mu$.
\end{proof}





\begin{proposition}\label{representation}
  Let $\cA$ be a difference--restriction algebra, and let $\theta$ be
  the map given by
  \begin{equation*}
    a^\theta \coloneqq \{(\xi, \mu)\in \mf \cA \times \mf \cA \mid \xi
    \approx \mu \ \text{ and } \ a \in \mu \}.
  \end{equation*}
  Then $\theta$ is a representation of the $\{ -, \rest\}$-algebra
  $\algebra A$ by partial functions.
\end{proposition}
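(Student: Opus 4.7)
The plan is to verify four things about $\theta$: \textbf{(a)} each $a^\theta$ is a partial function; \textbf{(b)} $\theta$ preserves $-$; \textbf{(c)} $\theta$ preserves $\rest$; and \textbf{(d)} $\theta$ is injective. Together these show that $\theta$ is an isomorphism of $\cA$ onto its image, which is then an algebra of partial functions on $\mf{\cA}$.

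Part \textbf{(a)} follows directly from \Cref{l:8}: if $(\xi, \mu_1)$ and $(\xi, \mu_2)$ both lie in $a^\theta$, then $\mu_1 \approx \mu_2$ by transitivity of $\approx$, and both filters extend the principal filter $a^\uparrow$; hence the coincidence of $\ppreceq$ with $\supseteq$ on $(a^\uparrow)^\Uparrow$ forces $\mu_1 = \mu_2$. Part \textbf{(b)} reduces (for fixed $\xi \approx \mu$ with $\mu$ maximal) to the equivalence $a - b \in \mu \iff a \in \mu$ and $b \notin \mu$, which is immediate from \Cref{max_condition} applied with the witness $a$. For \textbf{(d)}, given $a \neq b$, one of $a - b$ or $b - a$ is nonzero (else $a \leq b$ and $b \leq a$); \Cref{extension} then furnishes a maximal $\mu$ containing, say, $a - b$, and \textbf{(b)} places $(\mu, \mu)$ in $a^\theta \setminus b^\theta$.

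The main obstacle is part \textbf{(c)}. The $\supseteq$ inclusion is direct: if $(\xi, \mu) \in a^\theta \rest b^\theta$ then by definition there is some $\mu'$ with $\xi \approx \mu'$, $a \in \mu'$, $\xi \approx \mu$, $b \in \mu$; transitivity gives $\mu \approx \mu'$, so $\mu' \rest \mu \subseteq \mu$ by the definition of $\ppreceq$, whence $a \rest b \in \mu$. For the $\subseteq$ inclusion, one starts from $a \rest b \in \mu$ with $\xi \approx \mu$; the containment $b \in \mu$ is automatic from \eqref{restricts} and upward-closure of $\mu$, but producing a maximal $\mu' \approx \mu$ containing $a$ is the delicate part.

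My proposal is to take $\mu' := (\mu \rest a^\uparrow)^\uparrow$, which is a filter by \Cref{l:7}. That $a \in \mu'$ is automatic: for any $c \in \mu$, \eqref{restricts} gives $c \rest a \leq a$, with $c \rest a \in \mu \rest a^\uparrow$. The crux is properness. If $c \rest a' = 0$ for some $c \in \mu$ and $a' \geq a$, then \eqref{left} already gives $c \rest a = 0$, and associativity \eqref{eq:12} yields $c \rest (a \rest b) = (c \rest a) \rest b = 0$; but $c$ and $a \rest b$ both lie in $\mu$, which is closed under $\rest$ (as in the proof of \Cref{l:6}), forcing $0 \in \mu$, contrary to properness. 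Once $\mu'$ is known to be proper, \Cref{filter_up} applied with $F = a^\uparrow$ guarantees that $\mu'$ is maximal and $\mu \approx \mu'$, completing the construction.
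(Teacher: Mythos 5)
Your proof is correct and follows the paper's overall strategy: the same four checks, the same appeal to \Cref{l:8} for functionality, to \Cref{max_condition} for $-$, and, for the hard half of $\rest$, the very same candidate filter $(\mu \rest a^\uparrow)^\uparrow$ combined with \Cref{filter_up}. Two sub-arguments differ, both in the direction of brevity. For properness of $(\mu \rest a^\uparrow)^\uparrow$ the paper computes $(a \rest b)\bmeet c \sim (c \rest a)\bmeet b = 0$ via \eqref{eq:14} and \Cref{l:2}; you instead use associativity \eqref{eq:12} together with closure of filters under $\rest$ to get $c \rest (a \rest b) = 0 \in \mu$ directly (you tacitly use $0 \rest b = 0$, which is fine: it follows from \ref{lifting}, as in the proof of \Cref{l:3}). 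For the inclusion $a^\theta \rest b^\theta \subseteq (a \rest b)^\theta$, the paper passes to the quotient $\cA/{\sim}$ and uses \Cref{max_condition}, \Cref{c:1} and \eqref{disjoint_0} to exclude $b - (a \rest b)$ from $\mu$, whereas you read off $a \rest b \in \mu' \rest \mu \subseteq \mu$ straight from the definition of $\ppreceq$ and $\approx$; this needs no maximality of $\mu$ at all and is genuinely simpler. Your injectivity argument via \Cref{extension} is the paper's, phrased pointwise. No gaps.
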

\begin{proof} By Proposition~\ref{p:4}, we already known that $\theta$
  is a map to partial functions on $\mf \cA$ and preserves  the $\rest$ operation. Let $\xi,
  \mu \in \mf \cA$ be $\approx$-equivalent and $a, b \in \cA$. Then
  \begin{align*}
    (\xi, \mu) \in a^\theta - b^\theta
    & \iff a \in \mu \ \text{ and }\ b \notin \mu
    \\  & \iff a-b \in \mu \qquad\qquad\qquad \text{(by \Cref{max_condition})}
    \\  & \iff (\xi, \mu) \in (a-b)^\theta.
  \end{align*}
  Therefore $\theta$ is a homomorphism of $\{-, \rest\}$-algebras.
  
  Finally, we show that $\theta$ is injective. Since $\theta$ is a
  homomorphism, we have $a^\theta = b^\theta$ if and only if
  $(a-b)^\theta = \emptyset = (b-a)^\theta$. In turn, by
  \Cref{extension} this holds exactly when $a-b = 0 = b-a$, which
  implies $a = b$.
\end{proof}

\begin{theorem}\label{axiomatisation}
  The class of $\{ -, \rest\}$-algebras representable by partial
  functions is a variety, axiomatised by the finite set of equations
  \ref{schein1} -- \ref{lifting}.
\end{theorem}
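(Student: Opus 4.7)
The plan is to assemble the theorem from the pieces already established in the preceding sections, so the proof will be extremely short: essentially one sentence combining soundness and completeness.

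First, I would note that the representable algebras form a class closed under isomorphism by definition, so to conclude it is a variety it suffices, by Birkhoff's theorem, to exhibit an equational axiomatisation. For soundness, equations~\ref{schein1}--\ref{schein3} hold in every algebra of sets under relative complement, and thus in every algebra of partial functions; equations~\ref{eq:8} and \ref{lifting} were verified in \Cref{axioms}. So every representable $\{-,\rest\}$-algebra validates \ref{schein1}--\ref{lifting}.

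For completeness, the hard work has already been done: if $\cA$ is any $\{-,\rest\}$-algebra validating \ref{schein1}--\ref{lifting}, then \Cref{representation} provides an explicit representation $\theta\colon \cA \to \wp(\mf\cA \times \mf\cA)$ of $\cA$ by partial functions on $\mf\cA$. So any algebra satisfying the axioms is representable.

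Combining the two directions, the class of representable $\{-,\rest\}$-algebras coincides with the equationally defined class, and hence is a finitely based variety. No step here should present any genuine obstacle, since all substantive content has been packaged into \Cref{axioms} and \Cref{representation}; the proof is purely a matter of citing them and observing that the axiomatisation is finite and equational.
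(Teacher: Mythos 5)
Your proposal is correct and follows exactly the same route as the paper: soundness of \ref{schein1}--\ref{schein3} via the subtraction-algebra results recalled in \Cref{preliminaries}, soundness of \ref{eq:8} and \ref{lifting} via \Cref{axioms}, and completeness via the prime-filter representation of \Cref{representation}. The paper's own proof is the same two-line assembly of these ingredients.
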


\begin{proof}
  As we saw in \Cref{preliminaries} all representable algebras validate the
  axioms. By \Cref{representation}, every $\{ -,
  \rest\}$-algebra validating the axioms is representable.
\end{proof}

We finish this section with an alternative representation of any representable $\{-,
\rest\}$-algebra, using only
\emph{injective} partial functions. This representation is built from the representation exhibited
in \Cref{representation}.

\begin{corollary}
  Let $\cA$ be difference--restriction algebra, and let $\eta$ be the
  map given by
  \begin{equation*}
    a^\eta \coloneqq \{(\dbracket \mu, \mu)\in (\mf \cA/{\approx})
    \times \mf \cA \mid  a \in \mu \}.
  \end{equation*}
  Then $\eta$ is a representation of~$\algebra A$ by injective partial
  functions.
\end{corollary}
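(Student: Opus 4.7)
The plan is to closely mirror the proof of \Cref{representation}, since $\eta$ differs from $\theta$ only by replacing the first coordinate $\xi$ of each pair with the equivalence class $\dbracket\mu$ of the second coordinate. The new feature — that each $a^\eta$ is injective as a partial function — will come essentially for free from the definition: if $(\dbracket{\mu_1}, \mu)$ and $(\dbracket{\mu_2}, \mu)$ both lie in $a^\eta$, then by the defining form of elements of $a^\eta$ both first coordinates equal $\dbracket\mu$.

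The first genuine step I would carry out is to verify that each $a^\eta$ is a partial function. Suppose $(\dbracket\mu, \nu_1)$ and $(\dbracket\mu, \nu_2)$ belong to $a^\eta$. Then $\nu_1 \approx \mu \approx \nu_2$ and both $\nu_i$ contain $a$, so both lie in the upset $(a^\uparrow)^\Uparrow \subseteq \filt\cA$. By \Cref{l:8}, on this upset the relations $\ppreceq$ and $\supseteq$ coincide, so $\nu_1 \approx \nu_2$ forces $\nu_1 \supseteq \nu_2 \supseteq \nu_1$, hence $\nu_1 = \nu_2$. This is the one step for which the proof of \Cref{representation} contains no analogue, and constitutes the main (though slight) obstacle.

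I would then check that $\eta$ preserves $-$ and $\rest$ by a transcription of the corresponding arguments from the proof of \Cref{representation}: every appearance of the auxiliary first coordinate $\xi$ can be rewritten as $\dbracket\mu$, since the only property of $\xi$ used there was $\xi \approx \mu$, which is now literally built into the definition of $\eta$. In particular, preservation of $-$ is immediate from \Cref{max_condition}; preservation of $\rest$ employs the same witness filter $(\mu \rest \{a\})^\uparrow$, whose $\approx$-equivalence with $\mu$ is supplied by \Cref{filter_up} once its properness is established exactly as before. Finally, the injectivity of $\eta$ follows via \Cref{extension}: from $a^\eta = b^\eta$ one deduces $(a-b)^\eta = \emptyset = (b-a)^\eta$, which forces $a-b = 0 = b-a$ and hence $a = b$.
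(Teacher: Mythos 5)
Your proposal is correct, but it is organised quite differently from the paper's proof. You essentially re-run the whole proof of \Cref{representation} for $\eta$: functionality via \Cref{l:8}, preservation of $-$ via \Cref{max_condition}, preservation of $\rest$ via the witness $(\mu \rest \{a\})^\uparrow$ and \Cref{filter_up}, and injectivity of $\eta$ via \Cref{extension}. The paper instead disposes of all of this in one line, by observing that for all maximal filters $\xi, \mu$ and all $a \in \cA$ one has $(\xi,\mu) \in a^\theta \iff (\dbracket\xi, \mu) \in a^\eta$; since $\dbracket\xi = \dbracket\mu$ exactly when $\xi \approx \mu$, every verification (functionality of each $a^\eta$, correctness of $-$ and $\rest$, injectivity of $\eta$) transfers directly from \Cref{representation} through this correspondence, and injectivity of each $a^\eta$ as a partial function is immediate from the shape $(\dbracket\mu,\mu)$ of its elements, just as you note. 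So the paper's route buys brevity and makes clear that $\eta$ is literally $\theta$ after collapsing the first coordinate along $\approx$, whereas your route is self-contained but duplicates work. One small inaccuracy in your write-up: you say the functionality check for $a^\eta$ has ``no analogue'' in the proof of \Cref{representation}, but it does --- the paper proves that each $a^\theta$ is a partial function by exactly the same appeal to \Cref{l:8} on the upset $(a^\uparrow)^\Uparrow$, and under the paper's reduction even that step comes for free from functionality of $a^\theta$. This does not affect the correctness of your argument.
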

\begin{proof}
  This is a simple consequence of \Cref{representation} together with
  the observation that, for all maximal filters $\xi, \mu \subseteq
  \cA$ and $a \in \cA$, we have
  \[(\xi, \mu)\in a^\theta \iff (\dbracket \xi, \mu) \in a^\eta. \qedhere\]
\end{proof}

\begin{corollary}\label{cor:rep}
  The class of $\{ -, \rest\}$-algebras representable by injective
  partial functions is a variety, axiomatised by the finite set of
  equations \ref{schein1} -- \ref{lifting}.
\end{corollary}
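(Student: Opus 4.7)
The plan is to observe that this corollary is an immediate consequence of two already-established facts sitting on either side of it in the paper. On the soundness side, any algebra representable by injective partial functions is, a fortiori, representable by partial functions (since injective partial functions are partial functions), and hence by \Cref{axiomatisation} satisfies the equations \ref{schein1} -- \ref{lifting}. On the completeness side, the preceding corollary exhibits, for every $\{-,\rest\}$-algebra $\cA$ validating \ref{schein1} -- \ref{lifting}, an explicit representation $\eta$ by injective partial functions.

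The only thing to note is that the class is then closed under products, subalgebras, and homomorphic images (i.e.\ is a variety), but this follows automatically from the fact that it coincides with the equationally defined class given by \ref{schein1} -- \ref{lifting}. So the proof is essentially a two-line combination: soundness from \Cref{axiomatisation} (via the inclusion of injective representations into representations), and completeness from the preceding corollary.

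I do not anticipate any obstacle, as all the work has been done: the construction of $\eta$, the verification that it represents the operations (inherited from $\theta$ under the projection $(\xi,\mu) \mapsto (\dbracket{\xi},\mu)$), and the injectivity of each $a^\eta$ are handled in the previous corollary and \Cref{representation}.
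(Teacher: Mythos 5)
Your proposal is correct and matches the paper's intended argument: the paper states this corollary without proof precisely because it follows immediately from soundness (injective partial functions are partial functions, so \Cref{axiomatisation} applies) together with the injective representation $\eta$ constructed in the preceding corollary. Nothing is missing.
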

\section{Complete representability}\label{sec:complete}

In this section we discuss complete representations of $\{-,
\rest\}$-algebras and investigate the axiomatisability of the class of
completely representable algebras.

The next two definitions may apply to any function from a poset $\cP$
to a poset $\cQ$. So in particular, these definitions apply to
representations of Boolean algebras as fields of sets and to
representations of subtraction algebras or difference--restriction algebras as
algebras of partial functions, where a representation of a Boolean
algebra is viewed as an embedding into a full powerset algebra
$\mathcal P (X)$, and a representation of a
subtraction algebra or difference--restriction algebra is viewed as an embedding
into the algebra of all partial functions~$\mathcal P\mathcal F(X)$ on
some set~$X$. Since these are the only cases we are concerned with in
this section, and since existing meets/joins in both $\mathcal P(X)$
and $\mathcal P\mathcal F(X)$ are given by intersections/unions, we
will represent meets and joins in~$\cP$ by~$\meet$ and~$\join$
respectively and meets and joins in~$\cQ$ by~$\bigcap$ and~$\bigcup$
respectively, using subscripts if it is necessary to be more precise
about which poset we are in.

\begin{definition}\label{def:meet}
  A function $h: \cP \to \cQ$ is \defn{meet complete} if, for every
  nonempty subset~$S$ of~$\algebra{P}$, if $\meet S$ exists, then so
  does $\bigcap h[S]$ and
  \[h(\meet S) = \bigcap h[S]\text{.}\]
\end{definition}

\begin{definition}\label{def:join}
  A function $h: \cP \to \cQ$ is \defn{join complete} if, for every
  subset~$S$ of~$\algebra{P}$, if $\join S$ exists, then so does
  $\bigcup h[S]$ and
  \[h(\join S) = \bigcup h[S].\]
\end{definition}

Note that $S$ is required to be nonempty in \Cref{def:meet}, but not
in \Cref{def:join}. Despite the asymmetry, this is the natural choice
if we wish to formulate a definition of meet complete for partial
function algebras. Since the set $\mathcal P \mathcal F(X)$ has a top
element with respect to inclusion if and only if $X$ is a singleton,
requiring preservation of tops would prevent any (cardinality greater
than $2$) partial function algebra with a top from being meet
completely representable, including all finite ones and all completely
representable Boolean algebras (interpreted as $\{\rest, -\}$-algebras
of identity functions). Thus this would obstruct both meet complete
representability being an infinitary specialisation of
representability and partial function algebras being generalisations
of set algebras.


On the other hand, if a
representation of a Boolean algebra as a field of sets preserves the
existing nonempty meets, then it also preserves the top element (in a
Boolean algebra, we have $1 = a \vee \neg a$ for every $a$, and both
$\vee$ and $\neg$ are preserved by Boolean algebra homomorphisms). Thus for any algebra with a Boolean reduct our definition is not in conflict with the more usual definition. Readers who are uncomfortable with the deviation from standard terminology may choose to view our usage of \emph{meet complete} as shorthand for \emph{nonempty-meet complete}.

The clearest way to understand the underlying cause of the join--meet asymmetry is to realise that, for our purposes, the concepts that \emph{join} and \emph{meet} are providing formalisations of are `abstract union' and `abstract intersection'. Since the empty intersection does not (in an absolute sense) exist, the meet of the empty set will never have relevance for us.

A Boolean algebra can be represented using a meet-complete
representation if and only if it can be represented using a
join-complete representation, for the simple reason that any
meet-complete homomorphism \emph{is} join-complete, and vice versa. So
in this case we may simply describe such a homomorphism using the
adjective \defn{complete}.\begin{NoHyper}\footnote{In the case of
    representations of Boolean algebras as fields of sets, other
    adjectives have been used. Dana Scott suggested \emph{strong},
    which was subsequently used by Roger Lyndon; John Harding uses
    \emph{regular}.}\end{NoHyper} We will now see that the same
remarks apply to subtraction algebras, and hence to
difference--restriction algebras.

We now start to follow a part of \cite{complete} very closely---the end
of Section 2 and beginning of Section 3 there. The upcoming several
proofs (up to \Cref{atomistic}) are trivial adaptations of the proofs found in that paper, but
it is worth including them here, since they are all rather short. Note that although, in view of the subject of this paper, we choose to state some of these results in terms of representations of $\{-, \rest\}$-algebras by partial functions, the $\rest$ operation plays no role---the results hold more generally for representations of $\{-\}$-algebras by sets.

\begin{lemma}\label{l:15}
  Let $\algebra{A}$ and $\cB$ be subtraction algebras and $h: \cA
  \to \cB$ a homomorphism. For each $a\in \cA$, let $h_a: a^\downarrow
  \to h(a)^\downarrow$ denote the homomorphism of Boolean algebras
  induced by~$h$ (recall \Cref{c:3}). If $h$ is meet complete or
  join complete, then $h_a$ is complete.
\end{lemma}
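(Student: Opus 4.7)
The plan is to use two elementary facts about how meets and joins in the Boolean algebra $a^{\downarrow}$ relate to those in the ambient subtraction algebra $\cA$:
\begin{enumerate}[(a)]
\item For any nonempty $S \subseteq a^{\downarrow}$, the meets $\meet_{a^{\downarrow}} S$ and $\meet_{\cA} S$ coincide whenever either exists, since nonemptiness forces any $\cA$-lower bound of $S$ to lie in $a^{\downarrow}$ (via $c \leq s_0 \leq a$ for any chosen $s_0 \in S$).
\item For any $S \subseteq a^{\downarrow}$, if $\join_{a^{\downarrow}} S$ exists then $\join_{\cA} S$ exists and the two agree, since any $\cA$-upper bound $c$ of $S$ yields an $a^{\downarrow}$-upper bound $c \bmeet a$, so that $\join_{a^{\downarrow}} S \leq c \bmeet a \leq c$.
\end{enumerate}
The same two facts hold inside $\cB$ for $h(a)^{\downarrow}$.

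First I would treat the case in which $h$ is meet complete. Given a nonempty $S \subseteq a^{\downarrow}$ with $\meet_{a^{\downarrow}} S$ existing, observation (a) applied in $\cA$ gives $\meet_{\cA} S = \meet_{a^{\downarrow}} S$. Meet-completeness of $h$ then gives that $\bigcap_{\cB} h[S]$ exists and equals $h(\meet_{\cA} S)$, which is bounded above by $h(a)$; since $h[S] = h_a[S]$ is a nonempty subset of $h(a)^{\downarrow}$, applying (a) inside $\cB$ yields $\bigcap_{h(a)^{\downarrow}} h_a[S] = \bigcap_{\cB} h[S]$. Chaining these equalities shows that $h_a$ is meet complete. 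The case in which $h$ is join complete is handled symmetrically using (b) in place of (a), and without any nonemptiness proviso.

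Finally, since $h_a$ is a homomorphism of Boolean algebras (by \Cref{c:3}), its meet-completeness and join-completeness are equivalent: complementation within $a^{\downarrow}$ and within $h(a)^{\downarrow}$ is preserved by $h_a$ and swaps meets with joins via De Morgan, so preserving all meets is the same as preserving all joins. Hence under either hypothesis $h_a$ is complete. I do not foresee a major obstacle; the only step beyond purely formal manipulation is observation (b), where one must truncate an arbitrary upper bound via the derived meet $c \bmeet a$ in order to pull it inside $a^{\downarrow}$.
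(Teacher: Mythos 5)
Your proposal is correct and follows essentially the same route as the paper: comparing meets/joins in $a^\downarrow$ with those in $\cA$ (your facts (a) and (b) are exactly the paper's observations about lower bounds lying in $a^\downarrow$ and truncating upper bounds via $c \bmeet a$), transferring through the meet- or join-completeness of $h$, and then landing back inside $h(a)^\downarrow$. The only difference is cosmetic: you spell out the De Morgan equivalence of meet- and join-completeness for Boolean homomorphisms, which the paper invokes implicitly from its earlier discussion.
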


\begin{proof}
  We will show that if $h$ is meet (respectively, join) complete then
  each $h_a$ is meet (respectively, join) complete. Since
   meet complete and join complete are equivalent notions for Boolean algebras, it
  follows in both cases that $h_a$ is complete, as required.
  
  Suppose $h$ is meet complete. If $S$ is a nonempty subset of
  $a^\downarrow$, then all lower bounds for $S$ in $\algebra{A}$ are
  also in $a^\downarrow$. Hence if $\meet_{a^\downarrow} S$ exists
  then it equals $\meet_{\algebra{A}} S$, and so $\bigcap_{\algebra B}
  h[S]$ exists and equals $h(\meet_{a^\downarrow} S)$. This equality
  also tells us that $\bigcap_{\algebra B} h[S] \in
  h(a)^\downarrow$. Hence $h(\meet_{a^\downarrow} S) =
  \bigcap_{\algebra B} h[S] = \bigcap_{h(a)^\downarrow} h[S]$. So
  ~$h_a$ is complete.

  Suppose that $h$ is join complete, $S \subseteq a^\downarrow$, and 
  $\join_{a^\downarrow} S$ exists. If $c \in \algebra{A}$ and $c$ is
  an upper bound for $S$, then $c \geq c \bmeet a \geq
  \join_{a^\downarrow} S$. Hence $\join_{a^\downarrow} S =
  \join_{\algebra{A}} S$, giving the existence of $\bigcup_{\algebra B} h[S]$ and
  the equality $h(\join_{a^\downarrow} S) = h(\join_{\algebra{A}} S) =
  \bigcup_{\algebra B} h[S]$. This equality also tells us that $\bigcup_{\algebra B} h[S] \in h(a)^\downarrow$. Hence $h(\join_{a^\downarrow} S) =\bigcup_{\algebra B} h[S] = \bigcup_{h(a)^\downarrow} h[S]$. So $h_a$ is complete.
\end{proof}

\begin{corollary}\label{lemma:Boolean}
  Let $\algebra{A}$ be an algebra of the signature $\{ -, \rest\}$. Any
  representation $\theta$ of $\algebra{A}$ by partial functions
  restricts to a representation of $a^\downarrow$ as a field of sets
  over $\theta(a)$, which is complete if $\theta$ is meet complete or
  join complete.
\end{corollary}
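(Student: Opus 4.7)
The plan is to piggyback directly on \Cref{l:15} and \Cref{c:3}, which have just been established for subtraction algebra homomorphisms, together with the observation that any representation $\theta$ of the $\{-,\rest\}$-algebra $\cA$ by partial functions is in particular a homomorphism of subtraction algebras into the algebra of all partial functions on the base.

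The first step is to apply \Cref{c:3} to obtain the induced Boolean algebra homomorphism $\theta_a \colon a^\downarrow \to \theta(a)^\downarrow$, where $\theta(a)^\downarrow$ is computed in the ambient algebra of partial functions. The second step is to identify $\theta(a)^\downarrow$, as a Boolean algebra, with the field of \emph{all} subsets of the partial function $\theta(a)$: since the order is set inclusion, every subset of $\theta(a)$ is automatically a partial function, and the induced Boolean operations (meet, top, complementation) are exactly intersection, $\theta(a)$ itself, and set-theoretic relative complement within $\theta(a)$, which coincide with the standard powerset operations. Hence $\theta_a$ is a Boolean algebra homomorphism into the field of all subsets of $\theta(a)$. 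Injectivity of $\theta_a$ is inherited from that of $\theta$, since $\theta_a$ is simply the restriction of $\theta$ to $a^\downarrow$, so $\theta_a$ is a representation of $a^\downarrow$ as a field of sets over $\theta(a)$.

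For the completeness clause, I will invoke \Cref{l:15}. If $\theta$ is meet complete or join complete as a map of $\{-,\rest\}$-algebras, then, because the underlying orders of the $\{-,\rest\}$-reduct and of the subtraction algebra reduct coincide, $\theta$ is meet or join complete also when viewed as a homomorphism of subtraction algebras. Applying \Cref{l:15} to this subtraction algebra homomorphism immediately gives that $\theta_a$ is a complete Boolean algebra homomorphism.

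The only point requiring care is the bookkeeping identification of $\theta(a)^\downarrow$ (taken inside the subtraction algebra of partial functions) with the usual powerset Boolean algebra on the set $\theta(a)$; no new substantive argument is needed, and so there is no genuine obstacle to the proof.
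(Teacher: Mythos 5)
Your argument is correct and is essentially the proof the paper intends: the corollary is stated without proof precisely because it follows by viewing $\theta$ as a subtraction-algebra homomorphism into the (full) algebra of partial functions on the base, applying \Cref{c:3} and \Cref{l:15} to get the (complete) Boolean homomorphism $\theta_a$, and identifying $\theta(a)^\downarrow$ with the powerset of $\theta(a)$ since every subset of a partial function is a partial function. Your bookkeeping of injectivity and of the coincidence of the orders is exactly the routine verification the paper leaves implicit.
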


\begin{corollary}\label{cor:1}
  Let $\algebra{A}$ and $\cB$ be subtraction algebras and $h: \cA
  \to \cB$ be a homomorphism. If $h$ is meet complete, then it is join
  complete.
\end{corollary}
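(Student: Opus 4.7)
The plan is to deduce the result from \Cref{l:15} by reducing everything to the Boolean fact that a complete Boolean algebra homomorphism is, in particular, join complete. The key observation is that for any $a \in \cA$, the downset $a^\downarrow$ already captures all information about joins of subsets of $a^\downarrow$, since upper bounds in $\cA$ can be truncated via $\bmeet a$ without loss.

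Concretely, suppose $h \colon \cA \to \cB$ is meet complete and let $S \subseteq \cA$ be such that $\join_{\cA} S$ exists; write $a := \join_{\cA} S$. I first want to show $a = \join_{a^\downarrow} S$ in the Boolean algebra $a^\downarrow$. For this, note that $S \subseteq a^\downarrow$, that $a$ is an upper bound for $S$ in $a^\downarrow$, and that any upper bound for $S$ in $a^\downarrow$ is \emph{a fortiori} an upper bound in $\cA$, hence lies above $a$. (In the case $S = \emptyset$, one instead observes directly that $h(0_\cA) = h(a-a) = h(a) - h(a) = 0_\cB$.)

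Next, by \Cref{l:15}, the induced Boolean algebra homomorphism $h_a \colon a^\downarrow \to h(a)^\downarrow$ is complete, and so in particular join complete. Therefore $\bigcup_{h(a)^\downarrow} h[S]$ exists in $h(a)^\downarrow$ and equals $h_a(a) = h(a)$.

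It remains to show that $h(a)$ is also the join of $h[S]$ computed in the larger poset $\cB$, so that $\bigcup_{\cB} h[S] = h(a) = h(\join_\cA S)$, as required for join completeness. Clearly $h(a)$ is an upper bound for $h[S]$ in $\cB$ since each $s \leq a$ gives $h(s) \leq h(a)$. Suppose now that $c \in \cB$ is any upper bound for $h[S]$ in $\cB$. Then $c \bmeet h(a) \in h(a)^\downarrow$ is an upper bound for $h[S]$ in $h(a)^\downarrow$ (for each $h(s) \leq h(a)$ and $h(s) \leq c$, hence $h(s) \leq c \bmeet h(a)$), so by the join computed in $h(a)^\downarrow$ we have $c \bmeet h(a) \geq h(a)$, which forces $h(a) \leq c$. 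Hence $h(a)$ is indeed the least upper bound of $h[S]$ in $\cB$. I do not anticipate any real obstacle here; the only subtle point is the last step, where one must explicitly transfer the join from $h(a)^\downarrow$ up to $\cB$ via the truncation trick $c \mapsto c \bmeet h(a)$.
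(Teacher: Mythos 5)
Your proof is correct and follows the same route as the paper: set $a = \join_{\cA} S$, invoke \Cref{l:15} to get completeness of $h_a$, identify $\join_{\cA} S$ with $\join_{a^\downarrow} S$, and then transfer the join from $h(a)^\downarrow$ up to $\cB$. The paper compresses this into a single chain of equalities, leaving the final transfer step (your truncation $c \mapsto c \bmeet h(a)$) implicit, so your write-up simply supplies details the paper omits.
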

\begin{proof}
  Suppose that $h$ is meet complete. Let $S$ be a subset of
  $\algebra{A}$ and suppose that $\join_{\algebra{A}} S$ exists. Let
  $a = \join_{\algebra{A}} S$. Then $h_a$ is complete and so
  \[h(\join_{\algebra{A}} S) = h(\join_{a^\downarrow} S) = \bigcup_{h(a)^\downarrow}
    h[S]= \bigcup_{\algebra B}
    h[S]\text{.}\qedhere\]
\end{proof}

\begin{corollary}\label{cor:2}
  Let $\algebra{A}$ and $\cB$ be subtraction algebras and $h: \cA
  \to \cB$ be a homomorphism. If $h$ is join complete, then it is meet
  complete.
\end{corollary}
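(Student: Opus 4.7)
The plan is to mirror the proof of \Cref{cor:1}, confronting the one asymmetry between the two cases: any set $S$ with an existing join $a$ automatically lies in $a^\downarrow$, whereas a set with an existing meet need not, in general, lie in any single downset $b^\downarrow$. The main (indeed only) obstacle is therefore the preparatory step of reducing to a single Boolean sub-algebra to which \Cref{l:15} can be applied.

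To carry this out, I would let $S \subseteq \algebra A$ be nonempty with $\meet_{\algebra A} S$ existing, call it $a$, pick any $s \in S$, and form $S' \coloneqq \{s \bmeet t \mid t \in S\} \subseteq s^\downarrow$. Since $s$ itself belongs to $S$, one checks that the lower bounds of $S'$ in $\algebra A$ coincide with those of $S$, so $\meet_{s^\downarrow} S'$ exists and equals $a$. By \Cref{l:15}, the restriction $h_s \colon s^\downarrow \to h(s)^\downarrow$ is complete and in particular meet complete, so $\bigcap_{h(s)^\downarrow} h[S']$ exists and equals $h_s(a) = h(a)$.

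It then remains to promote this identity to an intersection of $h[S]$ in $\algebra B$. Plainly $h(a)$ is a lower bound for $h[S]$. Conversely, for any lower bound $c \in \algebra B$ of $h[S]$, the presence of $h(s)$ in $h[S]$ forces $c \leq h(s)$, so $c \in h(s)^\downarrow$; and for each $t \in S$ we have $c \leq h(s) \bmeet h(t) = h(s \bmeet t)$, so $c$ is a lower bound of $h[S']$ inside $h(s)^\downarrow$ and hence $c \leq h(a)$. This gives $\bigcap_{\algebra B} h[S] = h(a)$, as required.
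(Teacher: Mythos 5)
Your proposal is correct and follows essentially the same route as the paper: reduce to the set $S \bmeet s = \{s' \bmeet s \mid s' \in S\}$ inside the downset $s^\downarrow$ for a chosen $s \in S$, apply \Cref{l:15} to get completeness of $h_s$, and then transfer the resulting meet back from $h(s)^\downarrow$ to $\algebra B$ and from $h[S \bmeet s]$ to $h[S]$. The paper compresses these transfer steps into a single chain of equalities, whereas you spell out the lower-bound comparisons explicitly, but the argument is the same.
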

\begin{proof}
  Suppose that $h$ is join complete. Let $S$ be a nonempty subset of
  $\algebra{A}$ and suppose that $\meet_{\algebra{A}} S$ exists. As
  $S$ is nonempty, we can find $s \in S$. We let $S \bmeet s$ denote the set $\{s' \bmeet s \mid s' \in S\}$. Then $h_s$ is complete and
  \[h(\meet_{\algebra{A}} S) = h(\meet_{\algebra{A}} (S \bmeet s))
    = h(\meet_{s^\downarrow} (S \bmeet s)) = \bigcap_{h(s)^\downarrow} h[S \bmeet
    s] =\bigcap_{\algebra B} h[S \bmeet
    s] = \bigcap_{\algebra B} h[S]\text{.}\qedhere\]
\end{proof}

We have established that there is but one notion of complete homomorphism for representable $\{ -, \rest\}$-algebras. Hence there is but one notion of
complete representation for $\{ -, \rest\}$-algebras. If a $\{ -, \rest\}$-algebra has a complete representation we say it is \defn{completely representable}. 

We now move
on and consider the property of being atomic, both for algebras and
for representations. We will see that the completely representable
algebras are precisely the algebras that are representable and atomic.

\begin{definition}
Let $\algebra{P}$ be a poset with a least element, $0$. An \defn{atom} of $\algebra{P}$ is a minimal nonzero element of $\algebra{P}$. We write $\At(\algebra{P})$ for the set of atoms of $\algebra P$. We say that $\algebra{P}$ is \defn{atomic} if every nonzero element is greater than or equal to an atom. 
\end{definition}

We note that representations of $\{ -, \rest\}$-algebras necessarily
represent the partial order by set inclusion: this may be seen as a
consequence of \Cref{lemma:Boolean}. The following definition is
meaningful for any notion of representation where this is the case.

\begin{definition}
  Let $\algebra{P}$ be a poset with a least element and let $\theta$
  be a representation of $\algebra{P}$. Then $\theta$ is \defn{atomic}
  if $x \in \theta(a)$ for some $a \in \algebra{P}$ implies $x \in
  \theta(b)$ for some atom $b$ of $\algebra{P}$.
\end{definition}

We will need the following theorem.

\begin{theorem}[Hirsch and Hodkinson {\cite[Theorem 5]{journals/jsyml/HirschH97a}}]\label{thm:hirsch-hodkinson} Let $\algebra{B}$ be a Boolean algebra. A representation of $\algebra{B}$ as a field of sets is atomic if and only if it is complete.
\end{theorem}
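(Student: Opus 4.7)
The plan is to associate with each relevant point of the base an ultrafilter of the Boolean algebra, and then characterise both atomicity and completeness of the representation in terms of these ultrafilters. Concretely, for a representation $\theta\from\algebra B\to\wp(X)$ and each $x$ lying in some $\theta(a)$, the set $F_x\coloneqq\{c\in\algebra B\mid x\in\theta(c)\}$ is routinely checked to be an ultrafilter of $\algebra B$, and $\theta$ is atomic exactly when every such $F_x$ contains an atom of $\algebra B$.

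For the direction \emph{atomic implies complete}, I would take a nonempty $S\subseteq\algebra B$ for which $\meet S$ exists and verify $\bigcap\theta[S]\subseteq\theta(\meet S)$, the reverse inclusion being immediate from order preservation. Given $x\in\bigcap\theta[S]$, atomicity supplies an atom $b$ with $x\in\theta(b)$; for each $s\in S$ the equality $\theta(s)\cap\theta(b)=\theta(s\bmeet b)$ combined with $x\in\theta(s)\cap\theta(b)$ forces $s\bmeet b\neq 0$, and since $b$ is an atom this gives $b\leq s$. Hence $b\leq\meet S$, whence $x\in\theta(b)\subseteq\theta(\meet S)$.

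For \emph{complete implies atomic}, I would fix $x\in\theta(a)$ and argue by contradiction that $F_x$ contains an atom of $\algebra B$. Assuming it does not, the key step is to show that the only lower bound of $F_x$ in $\algebra B$ is $0$. A positive lower bound $b$ cannot lie outside $F_x$, for then $\compl{b}\in F_x$ would force $b\leq\compl{b}$ and hence $b=0$; on the other hand, if $b\in F_x$ then $b$ is the minimum of $F_x$, and for any $0<d<b$ the ultrafilter dichotomy yields either $d\in F_x$ or $b-d=b\bmeet\compl{d}\in F_x$, producing an element of $F_x$ strictly below $b$ and contradicting minimality; so $b$ would in fact be an atom of $\algebra B$ belonging to $F_x$, against hypothesis. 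Consequently $\meet F_x=0$ exists in $\algebra B$, and meet completeness applied to the nonempty set $F_x$ delivers $\emptyset=\theta(\meet F_x)=\bigcap\theta[F_x]$, contradicting $x\in\bigcap\theta[F_x]$.

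The main obstacle lies in this second direction: since $\algebra B$ is not assumed to be a complete lattice, meet completeness of $\theta$ only speaks about meets that actually exist in $\algebra B$. The dichotomy above is precisely what manufactures a set, namely $F_x$, whose meet provably exists (with the conveniently contradictory value $0$) under the assumption that $F_x$ contains no atom, allowing meet completeness to be invoked and the contradiction to be extracted.
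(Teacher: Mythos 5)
This theorem is not proved in the paper at all: it is imported verbatim from Hirsch and Hodkinson via the citation, so there is no in-paper proof to compare against. Your argument is correct and is essentially the standard proof of this fact. In the direction ``atomic implies complete'' your reasoning (pass from $x\in\bigcap\theta[S]$ to an atom $b$ with $x\in\theta(b)$, use $\theta(s)\cap\theta(b)=\theta(s\bmeet b)\ni x$ to get $b\leq s$ for all $s\in S$, hence $b\leq\meet S$) is precisely the pattern the paper itself uses in the first half of \Cref{prop:at-com} for the richer $\{-,\rest\}$ signature, so that half is a faithful specialisation. The direction ``complete implies atomic'' is the substantive content that the paper outsources to the citation, and your treatment of it is sound: $F_x=\{c\mid x\in\theta(c)\}$ is an ultrafilter because $\theta$ embeds $\algebra B$ into the full powerset of the base; if $F_x$ contained no atom, your dichotomy argument (a nonzero lower bound of $F_x$ must lie in $F_x$, is then its minimum, and minimality forces it to be an atom) correctly shows that $0$ is the only lower bound, so $\meet F_x=0$ \emph{exists}, and meet completeness gives $\bigcap\theta[F_x]=\theta(0)=\emptyset$, contradicting $x\in\bigcap\theta[F_x]$. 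Your closing remark identifies the right subtlety: completeness only applies to meets that exist, and manufacturing a set whose meet provably exists is exactly the point of the ultrafilter construction. One cosmetic note: you verify only meet completeness in the first direction, which suffices because, as the paper records in \Cref{cor:1} and \Cref{cor:2}, meet completeness and join completeness coincide for such homomorphisms.
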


\begin{proposition}\label{prop:at-com}
Let $\algebra{A}$ be an algebra of the signature $\{ -, \rest\}$ and $\theta$ be a representation of $\algebra{A}$ by partial functions. Then $\theta$ is atomic if and only if it is complete.
\end{proposition}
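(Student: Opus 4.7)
The plan is to reduce the statement to the Boolean algebra situation, where the corresponding theorem (Hirsch--Hodkinson, \Cref{thm:hirsch-hodkinson}) is already available, by exploiting \Cref{lemma:Boolean} which says that any representation $\theta$ of $\algebra A$ by partial functions restricts to a representation of each Boolean algebra $a^\downarrow$ as a field of sets over $\theta(a)$. The key bridging observation is that the atoms of $a^\downarrow$ are precisely the atoms of $\algebra A$ lying in $a^\downarrow$: if $b$ is an atom of $a^\downarrow$ and $0 < c \leq b$ in $\algebra A$, then $c \leq a$, so $c \in a^\downarrow$, forcing $c = b$.

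For the direction \emph{atomic implies complete}, I would first argue that atomicity of $\theta$ passes to each restriction $\theta|_{a^\downarrow}$. Indeed, if $x \in \theta(a)$ then there is an atom $b$ of $\algebra A$ with $x \in \theta(b)$; since $\theta(a) \cap \theta(b) = \theta(a \bmeet b)$ contains $x$, and $b$ is an atom, $a \bmeet b = b$, so $b \leq a$ and $b$ is an atom of $a^\downarrow$. By \Cref{thm:hirsch-hodkinson}, each restriction $\theta|_{a^\downarrow}$ is then complete as a Boolean algebra representation. To conclude meet completeness of $\theta$ (and hence completeness by \Cref{cor:1}), take a nonempty $S \subseteq \algebra A$ whose meet exists, pick $s \in S$, and note that $\meet S$ equals the meet of $\{s' \bmeet s \mid s' \in S\}$ in $s^\downarrow$. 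Completeness of $\theta|_{s^\downarrow}$ gives
\[\theta\bigl(\meet S\bigr) \;=\; \bigcap_{s' \in S} \theta(s' \bmeet s) \;=\; \bigcap_{s' \in S} \bigl(\theta(s') \cap \theta(s)\bigr) \;=\; \bigcap \theta[S],\]
the last equality using that $\bigcap \theta[S] \subseteq \theta(s)$ because $s \in S$.

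For the converse \emph{complete implies atomic}, I would again invoke \Cref{lemma:Boolean}: each restriction $\theta|_{a^\downarrow}$ inherits completeness from $\theta$, so by \Cref{thm:hirsch-hodkinson} it is atomic as a representation of the Boolean algebra $a^\downarrow$. Then, given any $x \in \theta(a)$, atomicity of $\theta|_{a^\downarrow}$ yields an atom $b$ of $a^\downarrow$ with $x \in \theta(b)$; by the bridging observation above, $b$ is also an atom of $\algebra A$, establishing atomicity of $\theta$.

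I do not anticipate a substantial obstacle: the content is already contained in Hirsch--Hodkinson and in \Cref{lemma:Boolean}. The only care needed is the verification that atoms transfer correctly between $a^\downarrow$ and $\algebra A$, and the routine bookkeeping that the partial-function meet $\bigcap \theta[S]$ really can be computed inside a single $\theta(s)$, which is immediate once one element of $S$ is fixed.
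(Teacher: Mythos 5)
Your proof is correct, and your ``complete implies atomic'' direction is exactly the paper's argument: restrict via \Cref{lemma:Boolean}, apply \Cref{thm:hirsch-hodkinson} to $a^\downarrow$, and note that an atom of $a^\downarrow$ is an atom of $\algebra A$. Where you diverge is the ``atomic implies complete'' direction. The paper proves meet completeness directly, without Hirsch--Hodkinson: given $(x,y) \in \bigcap\theta[S]$, it picks $s_0 \in S$, uses atomicity of $\theta$ to find an atom $a$ with $(x,y) \in \theta(a)$, observes $(x,y) \in \theta(a \bmeet s)$ for every $s \in S$ so that $a \bmeet s \neq 0$ and hence $a \leq s$ by atomicity of $a$, and concludes $a \leq \meet S$, giving $\bigcap\theta[S] \subseteq \theta(\meet S)$. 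You instead route this direction through the Boolean theorem as well: you check that atomicity of $\theta$ passes to each restriction $\theta|_{a^\downarrow}$ (your $a \bmeet b = b$ argument, which is sound), invoke \Cref{thm:hirsch-hodkinson} to get completeness of the restrictions, and then lift meet completeness to $\algebra A$ by fixing $s \in S$ and computing the meet of $\{s' \bmeet s \mid s' \in S\}$ inside $s^\downarrow$ --- essentially replaying the trick of \Cref{cor:2}. Both routes then use \Cref{cor:1} to pass from meet to full completeness. The paper's version buys a shorter, more elementary argument that uses Hirsch--Hodkinson only once; yours buys uniformity, treating both directions as instances of the Boolean result at the cost of the extra (but routine) verifications that atomicity restricts to $a^\downarrow$ and that $\meet_{\algebra A} S$ coincides with $\meet_{s^\downarrow}\{s' \bmeet s \mid s' \in S\}$.
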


\begin{proof}
Suppose that $\theta$ is atomic, $S$ is a nonempty subset of $\algebra{A}$ and $\meet S$ exists. It is always true that $\theta(\meet S) \subseteq \bigcap \theta[S]$, regardless of whether or not $\theta$ is atomic. For the reverse inclusion, we have
\[\begin{array}{cll}
 & (x, y) \in \bigcap \theta[S]
\\ \implies & (x, y) \in \theta(s) & \text{for all }s \in S
\\ \implies & (x, y) \in \theta(a) & \text{for some atom }a\text{ such that }(\forall s \in S)\text{ } a \leq s
\\ \implies & (x, y) \in \theta(a) & \text{for some atom }a\text{ such that } a \leq \meet S
\\ \implies &  (x, y) \in \theta(\meet S)\text{.}
\end{array}\]
The third line follows from the second because, choosing an $s_0 \in S$ we have $(x, y) \in \theta(s_0)$, hence some atom $a$  with $(x, y) \in \theta(a)$, and thus $(x, y) \in \theta(a \bmeet s)$ for any $s \in S$. So for all $s \in S$, the element $a \bmeet s$ is nonzero, so equals $a$, by atomicity of $a$, giving $a \leq s$.

Conversely, suppose that $\theta$ is complete. Let $(x, y)$ be a pair contained in $\theta(a)$ for some $a \in \algebra{A}$. By \Cref{lemma:Boolean}, the map $\theta$ restricts to a complete representation of $a^\downarrow$ as a field of sets. Hence, by \Cref{thm:hirsch-hodkinson}, $(x, y) \in \theta(b)$ for some atom $b$ of the Boolean algebra $a^\downarrow$. Since an atom of $a^\downarrow$ is clearly an atom of $\algebra{A}$, the representation $\theta$ is atomic.
\end{proof}

\begin{corollary}\label{cor:atomic}
Let $\algebra{A}$ be an algebra of the signature $\{ -, \rest\}$. If $\algebra{A}$ is completely representable by partial functions then $\algebra{A}$ is atomic.
\end{corollary}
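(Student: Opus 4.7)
The plan is to combine \Cref{prop:at-com}, which says a representation is atomic if and only if it is complete, with an elementary argument showing that an atomic representation witnesses atomicity of the underlying algebra.

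First I would fix a complete representation $\theta$ of $\algebra{A}$ and apply \Cref{prop:at-com} to conclude that $\theta$ is an \emph{atomic} representation. Next, to check that $\algebra{A}$ is atomic as an algebra, I would take an arbitrary nonzero $a \in \algebra{A}$ and produce an atom $b \in \algebra{A}$ with $b \leq a$. Since $\theta$ is injective and $\theta(0) = \emptyset$, the partial function $\theta(a)$ is nonempty, so I can pick some $(x,y) \in \theta(a)$. Atomicity of~$\theta$ then yields an atom $b$ of $\algebra{A}$ with $(x,y) \in \theta(b)$.

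It remains to show $b \leq a$. The key observation is that $\theta$ represents the derived meet $\cdot$ as set-theoretic intersection (since $a \cdot b := a - (a-b)$ and $\theta$ preserves $-$), so $(x,y) \in \theta(a) \cap \theta(b) = \theta(a \bmeet b)$. Hence $\theta(a \bmeet b) \neq \emptyset$, and by injectivity of $\theta$, $a \bmeet b \neq 0$. Since $a \bmeet b \leq b$ and $b$ is an atom, we must have $a \bmeet b = b$, i.e.\ $b \leq a$, as required.

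I do not anticipate any real obstacle here: the heavy lifting has already been done in \Cref{prop:at-com} (via \Cref{lemma:Boolean} and \Cref{thm:hirsch-hodkinson}); the only additional ingredients are injectivity of the representation and the atomicity of $b$ in $\algebra{A}$, which together with the intersection-preservation of $\theta$ immediately give $b \leq a$.
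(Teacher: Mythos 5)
Your argument is correct and is essentially identical to the paper's own proof: both extract a pair $(x,y)\in\theta(a)$ from $\theta(0)=\emptyset\neq\theta(a)$, invoke \Cref{prop:at-com} to get an atom $b$ with $(x,y)\in\theta(b)$, and conclude $b\leq a$ from $(x,y)\in\theta(a\bmeet b)$ forcing $a\bmeet b\neq 0$. No issues.
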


\begin{proof}
Let $a$ be a nonzero element of $\algebra{A}$. Let $\theta$ be any complete representation of $\algebra{A}$. Then $\emptyset = \theta(0) \neq \theta(a)$, so there exists $(x, y) \in \theta(a)$. By \Cref{prop:at-com}, the map $\theta$ is atomic, so $(x, y) \in \theta(b)$ for some atom $b$ in $\algebra{A}$. Then $(x, y) \in \theta(a \bmeet b)$, so $a \bmeet b > 0$, from which we may conclude that the atom $b$ satisfies $b \leq a$.
\end{proof}

For Boolean algebras, the algebra being atomic is necessary and sufficient for complete representability \cite{Abian01051971}. On the other hand, there exist scenarios in which being atomic is necessary but \emph{not} sufficient for complete representability, for example for the signature of composition, intersection, and antidomain, for representation by partial functions (\cite[Proposition 4.6]{complete}). Do we have sufficiency in our case? The answer is yes. But before we prove this we need a couple more lemmas.

\begin{definition}
A poset $\algebra{P}$ is \defn{atomistic} if its atoms are join dense in $\algebra{P}$. That is to say that every element of $\algebra{P}$ is the join of the atoms less than or equal to it.
\end{definition}

Clearly any atomistic poset is atomic. For subtraction algebras, and
in particular for $\{ -, \rest\}$-algebras representable by partial
functions, the converse is also true.

\begin{lemma}\label{atomistic}
Let $\algebra{A}$ be a subtraction algebra. If $\algebra{A}$ is atomic, then it is atomistic.
\end{lemma}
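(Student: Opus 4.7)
\textbf{Proof plan for \Cref{atomistic}.} The plan is to show that every element $a \in \cA$ equals the join (in $\cA$) of $B_a := \{b \in \At(\cA) \mid b \leq a\}$. The case $a = 0$ is trivial, since then $B_a = \emptyset$ has join $0$, so I will assume $a > 0$. Clearly $a$ is an upper bound of $B_a$, so the work is in showing it is the least upper bound.

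Let $c \in \cA$ be any upper bound of $B_a$; I want $a \leq c$. The key move is to consider $a - c$, which lives in the Boolean algebra $a^\downarrow$ (so in particular satisfies $a - c \leq a$). If $a - c \neq 0$, then by atomicity of $\cA$ there exists an atom $b'$ with $b' \leq a - c$. Since $a - c \leq a$, we get $b' \in B_a$, and hence $b' \leq c$ by the choice of $c$.

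The contradiction now comes from the observation that in $a^\downarrow$, the elements $a \bmeet c$ and $a - c$ are Boolean complements (indeed, $a - c = a - (a \bmeet c)$ by \eqref{eq:9}, and this is by definition the complement of $a \bmeet c$ in $a^\downarrow$). So $c \bmeet (a - c) = (c \bmeet a) \bmeet (a - c) = 0$. Combining $b' \leq c$ with $b' \leq a - c$ yields $b' = b' \bmeet b' \leq c \bmeet (a - c) = 0$, contradicting that $b'$ is an atom. Hence $a - c = 0$, which by \eqref{complement} forces $a \bmeet c = a - (a - c) = a - 0 = a$, i.e.\ $a \leq c$, as required.

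The main (and only real) obstacle is the correct handling of the interaction between subtraction and the implicit Boolean structure on $a^\downarrow$; but this is already in hand via \Cref{l:5}, \Cref{p:1}, and the identity \eqref{eq:9}, so the argument is short.
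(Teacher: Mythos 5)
Your proof is correct. It takes a somewhat different route from the paper's: the paper reduces everything to the Boolean algebra $a^\downarrow$, invokes the classical fact that atomic Boolean algebras are atomistic as a black box, and then only has to check (i) that the atoms of $a^\downarrow$ are exactly the atoms of $\cA$ below $a$, and (ii) that the join computed in $a^\downarrow$ is also the join in $\cA$, which it does by noting that any upper bound $c \in \cA$ for $\At(a^\downarrow)$ dominates the upper bound $c \bmeet a$ lying in $a^\downarrow$. You instead inline the whole argument directly in the subtraction algebra: given an arbitrary upper bound $c$ of the atoms below $a$, you consider $a - c$, use atomicity to extract an atom $b' \leq a - c \leq a$, and get a contradiction from $b' \leq c \bmeet (a-c) = 0$ via \eqref{disjoint_0}, then conclude $a \bmeet c = a - (a-c) = a$, i.e.\ $a \leq c$. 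In effect you re-prove the classical Boolean fact from first principles, and by working with arbitrary upper bounds $c \in \cA$ (not assumed below $a$) you absorb the paper's transfer-of-join step into the same computation. What this buys is a self-contained argument using only \eqref{complement}, \eqref{disjoint_0}, \eqref{eq:9} and atomicity, at the cost of a few extra lines; the paper's version is shorter modulo the cited fact, and both ultimately rest on the same Boolean structure of $a^\downarrow$. All the individual steps you use ($a - c \leq a$, the complementation in $a^\downarrow$, $a-0=a$) are indeed available from the axioms and the results you cite, so there is no gap.
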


\begin{proof}
  Suppose $\algebra{A}$ is atomic and let $a \in \algebra{A}$. We know the algebra $a^\downarrow$ is a Boolean algebra and 
  clearly it is atomic. It is well-known that atomic Boolean algebras
  are atomistic. So we have
\[
a = \join_{a^\downarrow} \At(a^\downarrow)   = \join_{\algebra{A}}  \At(a^\downarrow) = \join_{\algebra{A}} \{ x \in \At(\algebra{A}) \mid x \leq a \}\text{.}
\]
The second equality holds because any upper bound $c \in \algebra{A}$ for $\At(a^\downarrow)$ is above an upper bound in $a^\downarrow$, for example $c \bmeet a$. Hence the least upper bound in $a^\downarrow$ is least in $\algebra{A}$ also.
\end{proof}

The last lemma concerns properties of the atoms of representable $\{-,
\rest\}$-algebras.

\begin{lemma}\label{l:atom_preserved}
  Let $\algebra{A}$ be a representable algebra of the signature $\{ -,
  \rest\}$. Then
  \begin{enumerate}[label=(\alph*)]
  \item\label{item:1} if $x \in \At(\cA)$, then $[x] \in \At
    (\cA/{\sim})$;
  \item\label{item:2} for every $a \in \cA$ and $x \in \At(\cA)$,
    either $x \rest a = 0$ or $x \rest a$ is an atom. And moreover, $x
    \rest a$ is an atom if and only if $x \preceq a$  (and if and only if  $x \sim x \rest a$).
  \end{enumerate}
\end{lemma}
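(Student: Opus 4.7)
The plan is to derive both parts by reducing to the Boolean algebra structure on downsets, which has already been done by \Cref{l:5}, together with the observation (recorded after \Cref{l:3}) that the $\sim$-equivalence class of $0$ is the singleton $\{0\}$. Representability is actually not used; only the axioms \ref{schein1}--\ref{lifting} are needed, since all the infrastructure about $\sim$ and the downsets $a^\downarrow$ applies to any such algebra.

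For \ref{item:1}, first note that $[x] \neq [0]$ because $x \neq 0$ and $[0] = \{0\}$. To see minimality, suppose $[b] \preceq [x]$ with $[b] \neq [0]$. By \Cref{l:5}, the unique element of $x^\downarrow$ that is $\sim$-equivalent to $b$ is $b \rest x$, and this element satisfies $b \rest x \leq x$. Since $x$ is an atom of $\cA$, either $b \rest x = 0$---which would give $[b] = [0]$, a contradiction---or $b \rest x = x$, yielding $[b] = [x]$. So $[x]$ is an atom of $\cA/{\sim}$.

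For \ref{item:2}, I would first check that $x \rest a \preceq x$ by computing $x \rest (x \rest a) = (x \rest x) \rest a = x \rest a$ via \eqref{eq:12} and \ref{lifting}. By part \ref{item:1}, $[x]$ is an atom of $\cA/{\sim}$, and $[x \rest a] \preceq [x]$, so either $[x \rest a] = [0]$, in which case $x \rest a = 0$, or $[x \rest a] = [x]$. In the second case, I claim $x \rest a$ is an atom of $\cA$: given any $c \in \cA$ with $0 < c \leq x \rest a$, we have $[c] \preceq [x \rest a] = [x]$, and atomicity of $[x]$ forces $[c] \in \{[0],[x]\}$; since $c \neq 0$, we get $[c] = [x \rest a]$, and then \Cref{l:5} (uniqueness of the representative of a $\sim$-class in $(x \rest a)^\downarrow$) forces $c = x \rest a$.

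The ``moreover'' equivalences then follow quickly. By \Cref{l:2}, $x \sim x \rest a$ iff $[x] = [x] \wedge [a]$ iff $[x] \preceq [a]$ iff $x \preceq a$. For the equivalence with $x \rest a$ being an atom: if $x \rest a$ is an atom then $x \rest a \neq 0$, so the first case of the dichotomy fails and $[x \rest a] = [x]$, i.e.\ $x \sim x \rest a$; conversely, if $x \sim x \rest a$ then $[x \rest a] = [x] \neq [0]$, so $x \rest a \neq 0$ and the second case of the dichotomy applies, making $x \rest a$ an atom. There is no real obstacle here: the entire argument is a routine assembly of \Cref{l:2}, \Cref{l:5}, and part \ref{item:1}; the only point requiring attention is making sure one passes correctly between $\cA$ and $\cA/{\sim}$ when invoking atomicity.
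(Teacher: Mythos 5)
Your proof is correct and follows essentially the same route as the paper: both arguments work in the quotient $\cA/{\sim}$, use \Cref{l:2} to identify $[x\rest a]$ with $[x]\wedge[a]$, deduce part~\ref{item:2} from the atomicity of $[x]$ established in part~\ref{item:1}, and transfer back to $\cA$ via the order coincidence on downsets (your appeal to the uniqueness clause of \Cref{l:5} is just the paper's use of \Cref{c:2}, which is derived from it). Your observation that only axioms \ref{schein1}--\ref{lifting} are needed, not representability per se, is also consistent with the paper.
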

\begin{proof}\leavevmode
  \begin{enumerate}[label = (\alph*)]
  \item Let $a \in \cA$ be such that $[a] \prec [x]$. Then $a \rest x
    < x$. (We cannot have $a \rest x = x$, else, by \Cref{l:2}, $[x]
    \preceq [a]$.)  But then as $x$ is an atom, $a \rest x = 0$. Hence
    $[a] = [a \rest x] = [0]$. As $[a]$ was an arbitrary element below
    $[x]$, we conclude that $[x]$ is an atom.

  \item Suppose $x \rest a \neq 0$ and let $b \in \cA$ be such that $0
    \leq b < x \rest a$. By \Cref{c:2}, we have $0 \preceq b \prec x
    \rest a$ and, by~\Cref{l:2}, $x \rest a \preceq x$. But by
    part~\ref{item:1}, $[x]$ is an atom, and thus, $0 \sim b$ and $x
    \rest a \sim x$. This yields $b = 0$, and so, $x \rest a$ is an
    atom, and $x \preceq a$. Finally, by~\Cref{l:2} we have $x \preceq
    a$ if and only if $x \sim x \rest a$. \qedhere
\end{enumerate}
\end{proof}

\begin{proposition}\label{complete_representation}
  Let $\algebra A$ be an atomic difference--restriction algebra. Let
  $\theta$ be the map given by
  \begin{equation*}
    a^\theta \coloneqq \{(x,y) \in \At(\algebra A) \times \At(\algebra
    A) \mid  \ x \sim y \ \text{ and }\ y \leq a\}.
  \end{equation*}
  Then $\theta$ is a complete representation of $\algebra A$ by
  partial functions.
\end{proposition}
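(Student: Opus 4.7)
The plan is to verify, in sequence, that $\theta$ is well defined as a set of partial functions, that it is a homomorphism, that it is injective, and finally that it is atomic (hence, by \Cref{prop:at-com}, complete).

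First I would check that each $a^\theta$ is indeed a partial function. Suppose $(x,y), (x,z) \in a^\theta$. Then $y, z$ are atoms with $y \leq a \geq z$ and $y \sim x \sim z$, so $y \sim z$. Since both lie in $a^\downarrow$, \Cref{l:5} forces $y = z$.

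Next I would verify that $\theta$ respects $-$ and $\rest$. For $-$, I would rely on the following atom dichotomy: if $y \in \At(\cA)$ and $y \leq a$, then $y \leq b$ or $y \bmeet b = 0$ (by atomicity of $y$), and in the latter case $y = y - (y \bmeet b) \stackrel{\eqref{eq:9}}{=} y - b \stackrel{\eqref{eq:10}}{=} (y \bmeet a) - b = y \bmeet (a-b)$, so $y \leq a - b$. The converse direction uses $(a-b) \bmeet b = 0$ from~\eqref{disjoint_0}. This gives
\[(x,y) \in (a-b)^\theta \iff x \sim y,\ y \leq a,\ y \not\leq b \iff (x,y) \in a^\theta - b^\theta.\]
For $\rest$, the forward direction uses that $y \leq a \rest b$ implies $y \leq b$ by~\eqref{restricts} and $y \preceq a$ (from $a \rest b \preceq a$), so by \Cref{l:atom_preserved}\ref{item:2} the element $z := y \rest a$ is an atom with $z \leq a$ and $z \sim y$, witnessing $x \in \dom(a^\theta)$. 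For the backward direction, given $y \leq b$ and an atom $z \leq a$ with $z \sim y$, transitivity of $\sim$ yields $y \preceq a$, so $a \rest y = y$; then by~\eqref{eq:14},
\[y \bmeet (a \rest b) = a \rest (b \bmeet y) = a \rest y = y,\]
giving $y \leq a \rest b$.

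For injectivity, if $a \neq b$ then without loss of generality $a - b \neq 0$; atomicity of $\cA$ provides an atom $y \leq a - b$, and by the atom dichotomy above $y \leq a$ and $y \not\leq b$, so $(y,y) \in a^\theta \setminus b^\theta$. Finally, $\theta$ is atomic by construction: any $(x,y) \in a^\theta$ already has $y$ an atom and satisfies $(x,y) \in y^\theta$. By \Cref{prop:at-com}, atomicity of the representation is equivalent to completeness, concluding the proof.

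I expect the main obstacle to be the backward direction of correctness for $\rest$, which is the only step where the interplay between the equivalence relation $\sim$ (used to match up $x$ and $y$) and the order (used to witness $y \leq a \rest b$) must be carefully handled; the key move is passing from the witnessing atom $z \leq a$ with $z \sim y$ to the statement $y \preceq a$ (hence $a \rest y = y$) via transitivity of $\sim$, after which \eqref{eq:14} finishes the job.
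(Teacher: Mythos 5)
Your proof is correct, and while its skeleton (partial function, homomorphism for $-$ and $\rest$, injectivity, completeness) matches the paper's, it diverges in three places in ways worth noting. For the backward direction of $\rest$ the paper argues by contradiction in the quotient $\cA/{\sim}$: assuming $y \le b-(a\rest b)$ it derives $[x]=0$ using \Cref{c:1} and \eqref{disjoint_0}; your argument is direct and a bit cleaner: from $x\in\dom(a^\theta)$ and transitivity of $\sim$ you get $y\preceq a$, hence $a\rest y=y$, and then $(a\rest b)\bmeet y = a\rest(b\bmeet y)=a\rest y=y$ by \eqref{eq:14}, so $y\le a\rest b$ without leaving $\cA$. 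For injectivity the paper invokes atomisticity (\Cref{atomistic}), whereas you separate $a\neq b$ via $a-b\neq 0$ (or $b-a\neq 0$), pick an atom below $a-b$, and apply your atom dichotomy; this avoids \Cref{atomistic} altogether. Most significantly, for the final step the paper proves join completeness directly by a Boolean computation in $a^\downarrow$ showing $b\le\join S$ iff $b\le s$ for some $s\in S$, while you observe that the representation is atomic by construction (every pair in $a^\theta$ already lies in $y^\theta$ for the atom $y$) and invoke \Cref{prop:at-com}, which is proved earlier and independently, so there is no circularity. Your route buys brevity by reusing \Cref{prop:at-com} (at the cost of routing through Hirsch--Hodkinson's theorem, on which that proposition relies), whereas the paper's direct argument is self-contained at this point; both are valid, and the remaining steps (the partial-function check via \Cref{l:5}, the treatment of $-$ via \eqref{eq:9}, \eqref{eq:10}, \eqref{disjoint_0}, and the forward direction of $\rest$ via part (b) of \Cref{l:atom_preserved}) coincide in substance with the paper's.
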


\begin{proof}
  First we show that the relation $a^\theta$ is a partial
  function. That is, we argue that if $x \sim y \sim z$ for atoms $x,
  y, z$, with $y, z \le a$,  then $y = z$. But this is a consequence
  of Lemma~\ref{l:5}.

  Next we show that $\theta$ represents each operation correctly. We
  pick $a, b \in \algebra A$ and two
  $\sim$-equivalent atoms $x,y \in \At(\cA)$ .

  Showing that $(a-b)^\theta = a^\theta - b^\theta$ amounts to
  showing that
  \[y \leq a-b \iff (y \leq a\ \text{ and } \ y\not\leq b).\]
  By~\eqref{eq:10}, if $y \leq a-b$ then $y \leq a$. Suppose that
  we also have $y \leq b$. Then, $y \leq b\cdot (a-b)$ which,
  by~\eqref{disjoint_0}, yields $y = 0$, a contradiction. This shows
  the forward implication. Conversely, since $y \in
  \At(a^{\downarrow})$ and $a - b$ and $a \bmeet b$ are complements in the
  Boolean algebra $a^\downarrow$, it follows that $y \leq a - b$
  (because we are assuming $y \not\leq b$). Thus, we conclude that
  $\theta$ represents $-$ correctly.

  For $\rest$, first suppose that $(x,y) \in (a \rest
  b)^\theta$. By~\eqref{restricts}, we have $(x,y) \in b^\theta$. We
  show that $(x, y \rest a) \in a^\theta$, and thus $x \in \dom
  (a^\theta)$, yielding $(x, y) \in a^\theta \rest
  b^\theta$. By~\eqref{restricts}, we have $y \rest a \leq a$. Thus
  we only need to show that $x \sim y \rest a$. By \Cref{l:2}, we have
  $a \rest b \preceq a$, and since, by Lemma~\ref{l:3}, $\preceq$
  includes $\leq$, we have $y \preceq a \rest b$. Thus, $y \preceq a$
  and, again by~\Cref{l:2}, we have $y \sim y \rest a$. Since $x\sim
  y$ by hypothesis, we conclude $(x, y \rest a) \in a^\theta$ as
  claimed.

  Conversely, suppose $(x,y) \in a^\theta \rest b^\theta$, that is, $x
  \in \dom(a^\theta)$ and $y \leq b$. Since $y$ is an atom of the
  Boolean algebra $b^\downarrow$, we have $y \le a \rest b$ or $y \le
  b-(a \rest b)$. We suppose that $y \le b-(a \rest b)$ and we let $z
  \in \At(a^\downarrow)$ be $\sim$-equivalent to~$x$. Using that $\le$
  is included in $\preceq$, we have $y \preceq b- (a \rest b)$ and $z
  \preceq a$. Since $x \sim y \sim z$, it follows by~\Cref{l:2} that
  $x \preceq a \rest (b-(a \rest b))$. Now, using Lemma~\ref{l:2} and
  Corollary~\ref{c:1} in this order, we may compute:
  \[[a \rest (b - (a \rest b))] = [a] \wedge [b-(a\rest b)]] = [a]
    \wedge ([b] - [a]).\]
  Again by Corollary~\ref{c:1}, we known that $\cA/{\sim}$ is a
  subtraction algebra. Thus, by~\eqref{disjoint_0}, we have that $[a]
  \wedge ([b] - [a]) = [0]$, and by \Cref{l:3}\ref{item:7} it follows
  that $x = 0$, which is a contradiction. Therefore, we have $y \le (a
  \rest b)$ as intended.

  Next, we note that $\theta$ is injective. If $a^\theta = b^\theta$
  then $a$ and $b$ are greater than or equal to the same set of
  atoms. Since $\algebra A$ is atomistic (\Cref{atomistic}), $a$ and
  $b$ are each the supremum of this set of atoms, hence are equal.

   Finally, we
  show that $\theta$ is complete. By Proposition~\ref{prop:at-com},
  we know $\theta$ being complete is equivalent to it being atomic, and $\theta$
  is clearly atomic: for every $a \in \cA$ and $(x, y) \in a^\theta$,
  $y$ is an atom of $\cA$ such that $(x, y) \in y^\theta$.
\end{proof}

\begin{theorem}\label{complete_axiomatisation}
The class of $\{ -, \rest\}$-algebras that are completely representable by partial functions is axiomatised by the finite set of equations \ref{schein1} -- \ref{lifting} together with the $\forall\exists\forall$ first-order formula stating that the algebra is atomic.
\end{theorem}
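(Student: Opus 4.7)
The plan is to deduce the theorem directly from the results already established in the paper, with one small observation about the quantifier complexity of the atomicity condition.

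First, for soundness, I would argue: suppose $\algebra{A}$ is completely representable by partial functions. A complete representation is in particular a representation, so by \Cref{axiomatisation}, $\algebra{A}$ validates axioms \ref{schein1} -- \ref{lifting}. Moreover, \Cref{cor:atomic} asserts exactly that complete representability implies atomicity, so $\algebra{A}$ is atomic.

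Second, for completeness of the axiomatisation, I would assume $\algebra{A}$ satisfies \ref{schein1} -- \ref{lifting} and is atomic. By \Cref{axiomatisation}, $\algebra{A}$ is representable by partial functions; combining representability with atomicity, \Cref{complete_representation} then produces an explicit complete representation of $\algebra{A}$ (the one sending $a$ to the set of pairs of $\sim$-equivalent atoms with the second coordinate below $a$). So $\algebra{A}$ is completely representable.

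Finally, I would verify that the atomicity condition is $\forall\exists\forall$ by writing it out explicitly:
\[
\forall a \bigl(a \neq 0 \rightarrow \exists b \,(b \leq a \aand b \neq 0 \aand \forall c (c \leq b \rightarrow c = 0 \vee c = b))\bigr),
\]
noting that the relation $\leq$ is equationally definable (as $a = a \bmeet b$). Combined with the equational axioms, which are themselves $\forall$-statements, this gives a $\forall\exists\forall$ axiomatisation overall.

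There is no real obstacle here: all the non-trivial work has been carried out in \Cref{axiomatisation}, \Cref{cor:atomic}, and \Cref{complete_representation}. The theorem is essentially a bookkeeping corollary that packages those results together and records the quantifier complexity of the resulting axiom set.
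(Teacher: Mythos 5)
Your proposal is correct and follows essentially the same route as the paper's own proof: both directions are obtained by combining \Cref{axiomatisation}, \Cref{cor:atomic}, and \Cref{complete_representation}, exactly as the paper does. Your explicit check that atomicity is expressible as a $\forall\exists\forall$ formula (using the equational definability of $\leq$) is a harmless addition that the paper leaves implicit in the theorem statement.
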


\begin{proof}
By definition, the completely representable algebras are representable, and by \Cref{cor:atomic} they are atomic. \Cref{complete_representation} tells us the converse---that any representable and atomic algebra is completely representable. Hence the completely representable algebras are precisely those that are both representable and atomic. By \Cref{axiomatisation}, equations \ref{schein1} -- \ref{lifting} axiomatise representability. So with the addition of the formula stating the algebra is atomic, an axiomatisation of the completely representable algebras is obtained.
\end{proof}

As before, we can use the representation of \Cref{complete_representation} to
get a complete representation by \emph{injective} partial functions.
\begin{corollary}
  Let $\algebra A$ be an atomic difference--restriction algebra. Let
  $\eta$ be the map given by
  \begin{equation*}
    a^\eta \coloneqq \{([x],x) \in (\At(\algebra A)/{\sim}) \times \At(\algebra
    A) \mid  x \leq a\}.
  \end{equation*}
  Then $\eta$ is a complete representation of $\algebra A$ by
  injective partial functions.
\end{corollary}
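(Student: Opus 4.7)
The plan is to derive this complete representation from the complete representation $\theta$ of Proposition \ref{complete_representation}, mirroring the strategy used in the earlier corollary that derived an injective representation from $\theta$ of Proposition \ref{representation}. The key observation is that for all $a \in \algebra A$ and all atoms $x, y \in \At(\algebra A)$,
\[
(x, y) \in a^\theta \iff \bigl([x], y\bigr) \in a^\eta \text{ and } x \sim y,
\]
since the right-hand side forces $[x] = [y]$ (so the pair in $a^\eta$ must have the form $([y], y)$ with $y \le a$), which is precisely the condition $x \sim y$ and $y \le a$ defining $a^\theta$.

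First I would verify that each $a^\eta$ is a partial function on $\At(\algebra A)/{\sim}$ and in fact an \emph{injective} partial function. Both are immediate from the definition: elements of $a^\eta$ have the shape $([x], x)$, so the first coordinate determines the second (giving functionality) and the second determines the first via $[\cdot]$ (giving injectivity).

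Next, I would check that $\eta$ represents $-$ and $\rest$ correctly. Because a pair $([x], x)$ belongs to $a^\eta$ exactly when $x$ is an atom below $a$, and no other pairs appear in any $a^\eta$, the verification for $-$ reduces immediately to the Boolean algebra argument in the proof of Proposition \ref{complete_representation}: both $(a-b)^\eta$ and $a^\eta - b^\eta$ consist of those $([x], x)$ with $x \le a$ and $x \not\le b$. For $\rest$, one notes that $[x] \in \dom(a^\eta)$ if and only if there exists $z \sim x$ with $z \le a$, i.e.\ $x \preceq a$; then $([x], x) \in a^\eta \rest b^\eta$ iff $x \le b$ and $x \preceq a$, which by the analysis in Proposition \ref{complete_representation} is exactly the condition for $x \le a \rest b$, i.e.\ $([x], x) \in (a \rest b)^\eta$.

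Injectivity of the assignment $a \mapsto a^\eta$ and complete preservation of joins follow exactly as for $\theta$: since $\algebra A$ is atomic, hence atomistic by Lemma \ref{atomistic}, the set $\{x \in \At(\algebra A) \mid x \le a\}$ determines $a$, so $a^\eta = b^\eta$ forces $a = b$; and the argument showing that $b \le \join S$ iff $b \le s$ for some $s \in S$, for each atom $b$, translates verbatim into $(\join S)^\eta = \bigcup_{s \in S} s^\eta$. No step poses a genuine obstacle, because the structural work was done in Proposition \ref{complete_representation}; the only thing to watch is the reformulation of the domain condition for $\rest$ under the new parameterisation, which is handled by the observation about $x \preceq a$.
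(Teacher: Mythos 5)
Your proposal is correct and takes essentially the same route as the paper: the paper's proof is just the one-line observation that $([y],x) \in a^\eta \iff (y,x) \in a^\theta$, from which everything transfers from \Cref{complete_representation}, exactly the correspondence you state (your extra clause ``and $x \sim y$'' is redundant but harmless). The only minor imprecision is calling functionality of $a^\eta$ ``immediate from the definition'': that $[x]=[x']$ forces $x=x'$ for atoms $x,x'\leq a$ is precisely the functionality of $a^\theta$ (resting on \Cref{l:5}), which your correspondence delivers in any case.
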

\begin{proof}
  This follows from \Cref{complete_representation} together with the
  observation that for every $x, y \in \At(\cA)$ and $a\in \cA$, we
  have
  \[([y],x) \in a^\eta \iff (y, x) \in a^\theta. \qedhere\]
\end{proof}

\begin{corollary}\label{complete_corollary}
  The class of $\{ -, \rest\}$-algebras that are completely
  representable by injective partial functions is axiomatised by the
  finite set of equations \ref{schein1} -- \ref{lifting} together with
  the $\forall\exists\forall$ first-order formula stating that the
  algebra is atomic.
\end{corollary}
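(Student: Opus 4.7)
The plan is to deduce this corollary by combining the two preceding results: Theorem~\ref{complete_axiomatisation} (which handles the non-injective case) and the immediately preceding corollary (which upgrades the representation in \Cref{complete_representation} to one by injective partial functions). The argument will proceed by showing both inclusions of the class of models.

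For the forward direction, I would observe that every injective partial function is \emph{a fortiori} a partial function, so any complete representation of $\cA$ by injective partial functions is also a complete representation of $\cA$ by partial functions. Hence by \Cref{complete_axiomatisation}, $\cA$ must validate \ref{schein1}--\ref{lifting} and the $\forall\exists\forall$ sentence asserting atomicity.

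For the converse direction, I would start from an algebra $\cA$ satisfying \ref{schein1}--\ref{lifting} together with atomicity. By \Cref{axiomatisation}, $\cA$ is representable, and by the hypothesis it is atomic; the preceding corollary then exhibits an explicit complete representation $\eta$ of $\cA$ by injective partial functions (its completeness being inherited from $\theta$ through the noted bijective correspondence $([y],x)\in a^\eta \iff (y,x)\in a^\theta$). This completes both inclusions and thereby the axiomatisation.

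I do not anticipate any real obstacle here: the two prior results do all the work. The only substantive check is the trivial observation that the forward direction does not require any genuinely new argument, since complete representability by injective partial functions specialises complete representability by partial functions, so the atomicity conclusion of \Cref{cor:atomic} and the equational soundness of \Cref{preliminaries} both carry over unchanged.
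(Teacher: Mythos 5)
Your proposal is correct and follows the same route the paper intends: the corollary is obtained by combining \Cref{complete_axiomatisation} (equivalently \Cref{cor:atomic} and soundness) for the forward direction with the immediately preceding corollary, which provides the injective complete representation $\eta$, for the converse. The observation that a complete representation by injective partial functions is in particular a complete representation by partial functions is exactly the (implicit) step the paper relies on, so nothing is missing.
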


The axiomatisation of \Cref{complete_axiomatisation} and \Cref{complete_corollary} uses the minimum possible degree of quantifier alternation, for it is not possible to axiomatise these classes using any $\exists\forall\exists$ first-order theory, finite or otherwise.

\begin{proposition}
The class of $\{ -, \rest\}$-algebras that are completely representable by partial functions and the class of $\{ -, \rest\}$-algebras that are completely representable by injective partial functions are not axiomatisable by any $\exists\forall\exists$ first-order theory.
\end{proposition}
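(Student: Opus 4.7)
The plan is to exhibit a pair of $\{-,\rest\}$-algebras $\algebra A$ and $\algebra B$ such that $\algebra A$ is atomic and representable---and hence completely representable both by partial functions and by injective partial functions, by \Cref{complete_axiomatisation} and \Cref{complete_corollary}---while $\algebra B$ is representable but not atomic, and therefore completely representable in neither sense (by \Cref{cor:atomic}). Crucially, the pair will be engineered so that $\algebra B$ is a $\forall\exists$-extension of $\algebra A$, meaning that there is an embedding $\algebra A \hookrightarrow \algebra B$ such that every $\forall\exists$ first-order formula with parameters in (the image of) $\algebra A$ that holds in $\algebra A$ also holds in $\algebra B$. Any such pair refutes $\exists\forall\exists$-axiomatisability of both complete representation classes: if $\exists \bar x\,\forall\bar y\,\exists \bar z\,\phi(\bar x, \bar y, \bar z)$, with $\phi$ quantifier-free, is true in $\algebra A$ via some witness $\bar a\in \algebra A$, then the $\forall\exists$-formula $\forall \bar y\, \exists \bar z\, \phi(\bar a, \bar y, \bar z)$ holds in~$\algebra A$, hence in~$\algebra B$ by hypothesis, so $\algebra B \models \exists \bar x\,\forall\bar y\,\exists \bar z\,\phi$ as well. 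Any alleged $\exists\forall\exists$ axiomatisation of either class would then be satisfied by~$\algebra B$, contradicting the fact that $\algebra B$ lies in neither class.

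For the construction, I would take $\algebra A$ to be a highly homogeneous countable atomic representable algebra, for instance the algebra of all finite partial functions on a countably infinite base~$X$, whose atoms are the singletons $\{(x,y)\}$ and which admits a rich group of automorphisms induced by permutations of~$X$. I would then build $\algebra B$ by iteratively extending $\algebra A$, inside a larger algebra of partial functions on an enlarged base $X'\supseteq X$, realising enough existential types over~$\algebra A$ to secure the $\forall\exists$-extension property---this is analogous to the standard model-theoretic construction of existentially closed extensions---while simultaneously adjoining a distinguished element $e$, a partial function supported on $X'\setminus X$, whose $\{-,\rest\}$-closure contains no singleton below~$e$. Representability of each finitely generated stage can be checked directly against axioms \ref{schein1}--\ref{lifting}; in the limit, \Cref{axiomatisation} and \Cref{cor:rep} yield the representability of $\algebra B$ both by partial functions and by injective partial functions, and by construction $e$ is a nonzero element with no atom below it, witnessing non-atomicity of~$\algebra B$.

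The main obstacle is carrying out the iteration so as to simultaneously (i) realise all the existential types over~$\algebra A$ needed for $\forall\exists$-elementarity upward, and (ii) preserve non-atomicity at~$e$ by ensuring that no atom of $\algebra B$ ever appears below~$e$. These two goals are in tension: realising existential witnesses runs the risk of forcing unwanted atoms below the non-atomic element. The enlargement $X'\setminus X$ and the fresh partial functions introduced at each stage must therefore be chosen so that no splitting of~$e$ into a singleton plus its complement is ever a consequence of axioms \ref{schein1}--\ref{lifting}, while still providing the witnesses demanded by each relevant $\forall\exists$-type over~$\algebra A$. The homogeneity of~$\algebra A$---its large automorphism group permuting finite configurations of atoms---provides the flexibility needed for this balancing act, and once such a pair $(\algebra A, \algebra B)$ is obtained, the same pair handles both the partial function and the injective partial function cases at once, yielding the proposition.
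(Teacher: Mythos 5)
Your overall reduction is sound: if $\algebra B$ satisfies every $\exists\forall\exists$ sentence that $\algebra A$ does, with $\algebra A$ completely representable and $\algebra B$ representable but not atomic, then by \Cref{complete_axiomatisation}, \Cref{complete_corollary} and \Cref{cor:atomic} no $\exists\forall\exists$ theory can axiomatise either class. The problem is that the entire technical content of the proposition is the existence of such a pair, and your proposal does not actually produce one. The iterative construction of $\algebra B$ as a parameterised $\forall\exists$-extension of the algebra of finite partial functions is only gestured at, and you yourself identify the crux---realising enough existential witnesses over $\algebra A$ without ever forcing an atom below the distinguished element $e$---without resolving it. Nothing in the sketch explains how the stages are chosen, why the limit satisfies all $\forall\exists$ formulas with parameters in $\algebra A$ (a condition strictly stronger than what is needed, and not delivered by a generic ``existentially closed extension'' construction), or why non-atomicity survives. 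As written, this is a plan with its hardest step left open, not a proof.

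The paper avoids this difficulty entirely by a reduction to Boolean algebras: any Boolean algebra becomes a $\{-,\rest\}$-algebra by setting $a-b \coloneqq a\bmeet\compl{b}$ and $a\rest b \coloneqq a\bmeet b$, it is representable by partial functions (identity functions on the sets of a Stone representation), and its complete representability is then equivalent to atomicity. One then quotes the known fact (\cite[Proposition~3.7]{complete}) that there exist an atomic Boolean algebra and a non-atomic one with the same $\exists\forall\exists$ first-order theory, and the pair separates the putative axiomatisation exactly as in your reduction. If you want to salvage your approach, the honest route is either to import such a known pair (for Boolean algebras or via a back-and-forth/game argument of the Hirsch--Hodkinson type) or to carry out your chain construction in full detail, including a verification of the parameterised $\forall\exists$ transfer; at present that step is a genuine gap.
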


\begin{proof}
  Any Boolean algebra $\cB = (B, 0, 1, \wedge, \compl{\phantom{c}})$
  can interpret an algebra $\cB_{\{ -, \rest\}}$ of the signature $\{ -, \rest\}$ by
  setting $a - b \coloneqq a \wedge \compl{b}$ and $a \rest b
  \coloneqq a \wedge b$, and it is easy to check that $\cB$ equipped with these two operations satisfies axioms~\ref{schein1}--\ref{lifting}.
   Moreover, since the derived
  operation $\cdot$ is given by
  \[a \cdot b \just{\eqref{complement}}= a - (a - b) = a \wedge
    \overline{(a \wedge \overline b)} = a \wedge (\overline{a} \vee b)
    = a \wedge b,\]
  the orderings on $\cB$ and $\cB_{\{ -, \rest\}}$ coincide, and in particular $\cB_{\{ -, \rest\}}$ is atomic if and only if $\cB$ is. 
 On
  the other hand, there exist Boolean algebras $\algebra B$ and
  $\algebra B'$ with $\algebra B$ atomic and $\algebra B'$ not, such
  that $\algebra B$ and $\algebra B'$ satisfy the same
  $\exists\forall\exists$ first-order theory---see \cite[Proposition
  3.7]{complete} for a proof of this fact. Hence $\algebra B_{\{ -, \rest\}}$ and $\algebra B'_{\{ -, \rest\}}$ also have the same $\exists\forall\exists$ first-order theory as one another, since their basic operations are defined by terms in the Boolean signature. 
  Thus, by \Cref{complete_axiomatisation}/\Cref{complete_corollary},
  $\algebra B_{\{ -, \rest\}}$ and $\algebra B'_{\{ -, \rest\}}$ witness that any
  $\exists\forall\exists$ first-order theory cannot have all and only
  the completely representable algebras as its models.
\end{proof}

\section*{Declarations}
\subsection*{Acknowledgements}
The authors would like to thank the anonymous referees for the careful
reading of the paper and for their useful suggestions that helped to
improve the presentation of our work, in particular allowing us to present the results of Section~4
as a consequence of those from Section~3, thereby making the paper clearer.

\subsection*{Funding}
The first author was partially supported by the Centre for Mathematics
of the University of Coimbra - UIDB/00324/2020, funded by the
Portuguese Government through FCT/MCTES and partially supported by
the European Research Council (ERC) under the European Union's Horizon
2020 research and innovation program (grant agreement No. 670624).
The second author was partially supported by
the European Research Council (ERC) under the European Union's Horizon
2020 research and innovation program (grant agreement No. 670624) and partially supported by the Research Foundation -- Flanders (FWO) under the SNSF--FWO Lead Agency Grant 200021L 196176 (SNSF)/G0E2121N (FWO).
\subsection*{Conflicts of interest} The authors have no relevant financial or non-financial interests to disclose.
\subsection*{Data availability statement} Data sharing is not applicable to this article as no datasets were generated or analysed during the current study.

\bibliographystyle{amsplain}

\bibliography{../brettbib}
\end{document}